\newtheorem{theorem}{Theorem}[section]
\newtheorem{lemma}[theorem]{Lemma}
\newtheorem{conjecture}[theorem]{Conjecture}
\newtheorem{proposition}[theorem]{Proposition}
\newtheorem{corollary}[theorem]{Corollary}
\theoremstyle{definition}
\newtheorem{definition}[theorem]{Definition}
\newtheorem{example}[theorem]{Example}
\newtheorem{remark}[theorem]{Remark}
\newtheorem{observation}[theorem]{Observation}
\def\repeattheorem#1#2{%
	\theoremstyle{plain}
	\newtheorem*{RepeatedTheorem\the\repeatedID}{#1~\ref{#2}}%
	\begin{RepeatedTheorem\the\repeatedID}%
}
\def\endrepeattheorem{%
	\end{RepeatedTheorem\the\repeatedID}%
	\global\advance\repeatedID 1\relax
}
\definecolor{internalLink}{rgb}{0.5,0,0}
\definecolor{citeLink}{rgb}{0,0.5,0}
\definecolor{urlLink}{rgb}{0,0,0.5}
\newcommand{\singlespace}{\renewcommand{\baselinestretch}{1.10}\selectfont}
\newcommand{\oneandhalfspace}{\renewcommand{\baselinestretch}{1.25}\selectfont}
\DeclareMathOperator{\id}{id}		
\DeclareMathOperator{\rk}{rk}		
\let\oddParameter\xi
\def\Z{\mathbb{Z}}
\def\Zev{\Z_e}
\def\Zodd{\Z_o}
\def\Zunfd{\Z_{\oddParameter}}
\def\ZmodTwo{\Z_2}
\def\scalarsLong{\@ifstar
	{\Z[\permMM,\permSS,\permMS^{\pm1}]/(\permMM^2{=}\permSS^2{=}1)}%
	{\Z[\permMM,\permSS,\permMS^{\pm1}]/(\permMM^2=\permSS^2=1)}}
\def\ZevLong{\@ifstar
	{\Zunfd/(\oddParameter{-}1)}%
	{\Zunfd/(\oddParameter-1)}}
\def\ZoddLong{\@ifstar
	{\Zunfd/(\oddParameter{+}1)}%
	{\Zunfd/(\oddParameter+1)}}
\def\ZunfdLong{\@ifstar
	{\Z[\oddParameter]/(\oddParameter^2{-}1)}%
	{\Z[\oddParameter]/(\oddParameter^2-1)}}
\newcommand*{\KhCom}{\mathcal C}
\newcommand*{\EKhCom}{\KhCom_e}
\newcommand*{\OKhCom}{\KhCom_o}
\newcommand*{\UKhCom}{\KhCom_{\oddParameter}}
\newcommand*{\KhComModTwo}{\KhCom_{\ZmodTwo}}
\newcommand*{\RedKhCom}{\overline{\KhCom}}
\newcommand*{\RedUKhCom}{\RedKhCom_{\oddParameter}}
\newcommand*{\Kh}{\mathcal{H}}
\newcommand*{\EKh}{\Kh_e}
\newcommand*{\OKh}{\Kh_o}
\newcommand*{\BKh}{\Kh_{\oplus}}
\newcommand*{\UKh}{\Kh_{\oddParameter}}
\newcommand*{\KhModTwo}{\Kh_{\ZmodTwo}}
\newcommand*{\RedKh}{\overline{\Kh}}
\newcommand*{\RedEKh}{\RedKh_e}
\newcommand*{\RedOKh}{\RedKh_o}
\newcommand*{\RedUKh}{\RedKh_{\oddParameter}}
\newcommand*{\RedKhModTwo}{\RedKh_{\ZmodTwo}}
\def\diffEv{d_e}
\def\diffOdd{d_o}
\def\diffUnfd{d_{\oddParameter}}
\def\diffModTwo{d}
\def\BockEv{\beta_{e}}
\def\BockOdd{\beta_{o}}
\def\BockMixed{\beta}
\def\BockEvOdd{\varphi_{eo}}
\def\BockOddEv{\varphi_{oe}}
\def\BockSqEv{\theta_{e}}
\def\BockSqOdd{\theta_{o}}
\def\BockEvOddDS{\expandafter\tilde\BockEvOdd}
\def\BockOddEvDS{\expandafter\tilde\BockOddEv}
\def\RedBockEv{\bar\beta_{e}}
\def\RedBockOdd{\bar\beta_{o}}
\def\RedBockMixed{\bar\beta}
\def\RedBockEvOdd{\bar\varphi_{eo}}
\def\RedBockOddEv{\bar\varphi_{oe}}
\def\RedBockSqOdd{\bar\theta_{o}}
\def\Sq{\mathit{Sq}}
\definecolor{bockev}{rgb}{0,0,1}
\definecolor{bockodd}{rgb}{1,0,0}
\def\bockevarrow#1#2{%
	\diagline[linestyle=none]{-}{#1}{#2}\diagaput*{$\scriptscriptstyle\BockEv$}%
	\diagline[nodesep=-2pt,linecolor=bockev]{->}{#1}{#2}%
}
\def\bockoddarrow#1#2{%
	\diagline[linestyle=none]{-}{#1}{#2}\diagaput*{$\scriptscriptstyle\BockOdd$}%
	\diagline[nodesep=-2pt,linecolor=bockodd]{->}{#1}{#2}%
}
\newcommand*\Ext{\mathrm{Ext}}		
\newcommand*\Tor{\mathrm{Tor}}		
\def\dertensor{\mskip\thinmuskip{\hat\tensor}\mskip\thinmuskip}
\def\tensor{\@ifnextchar[\tensor@over\otimes}
\def\tensor@over[#1]{\mskip\thinmuskip{\underset{\mathclap{#1}}{\otimes}}\mskip\thinmuskip}
\newcommand*\LieSL{\mathfrak{sl}}
\def\quot#1{`#1'}
\def\arXiv{\@ifstar\arXiv@@\arXiv@}
\def\arXiv@#1{\href{http://front.math.ucdavis.edu/#1}{arXiv:#1}}
\def\arXiv@@#1#2{\href{http://front.math.ucdavis.edu/#2}{arXiv:#1}}
\begin{document}

\oneandhalfspace


\title[Knot Invariants from Homological Operations]{Knot Invariants Arising\\
from Homological Operations\\ on Khovanov Homology}

\author{Krzysztof K.\ Putyra}
\address{ETH~Z\"urich, Institute for Theoretical Studies, Z\"urich, Switzerland}
\email{krzysztof.putyra@eth-its.ethz.ch}
\thanks{KKP is supported by the~NCCR SwissMAP founded by the~Swiss National Science Foundation}

\author{Alexander N.\ Shumakovitch}
\address{Department of Mathematics, The George Washington University,
Washington DC, U.S.A.}
\email{Shurik@gwu.edu}
\thanks{A.Sh.\ is partially supported by a Simons Collaboration Grant for
Mathematicians~\#279867}

\begin{abstract}
	We construct an~algebra of non-trivial homological operations on Khovanov homology with coefficients in $\ZmodTwo$ generated by two Bockstein operations. We use the~unified Khovanov homology theory developed by the~first author to lift this algebra to integral Khovanov homology. We conjecture that these two algebras are infinite and present evidence in support of our conjectures. Finally, we list examples of knots that have the same even and odd Khovanov homology, but different  actions of these homological operations. This confirms that the~unified theory is a~finer knot invariant than the even and odd Khovanov homology combined. The~case of reduced Khovanov homology is also considered.
\end{abstract}

\maketitle

\renewcommand{\labelenumi}{\theenumi.}

\section{Introduction}\label{sec:intro}

Throughout this paper, $L$ will denote a link in $\mathbb R^3$ and $D$ its planar diagram. There are two integral versions of $\LieSL_2$ link homology of $L$: the ordinary Khovanov homology $\EKh(L)$~\cite{KhHom}, which we refer to is this paper as \emph{even}, and the odd Khovanov homology $\OKh(L)$~\cite{OddKhHom}. Despite being different over integers~\cite{ShumComp}, even and odd Khovanov homology theories agree modulo $2$. We denote the resulting homology by $\KhModTwo(L)$. Differentials in the even and odd Khovanov chain complexes induce two Bockstein connecting homomorphisms on $\KhModTwo(L)$ that correspond to the~short exact sequence of coefficients $0\to\Z_2\to\Z_4\to\Z_2\to0$. We denote them by $\BockEv$ and $\BockOdd$ respectively:
\begin{equation}
	\BockEv,\BockOdd\colon \KhModTwo^i(L)\to \KhModTwo^{i+1}(L).
\end{equation}

Ranks of each of $\BockEv$ and $\BockOdd$ can be easily recovered from the~integral homology: they are equal to the number of invariant factors isomorphic to $\Z_2$ in the corresponding homology group, see~\cite[Proposition~3E.3]{Hatcher}. On the other hand, $\BockEv$ and $\BockOdd$ are as algebraically independent as possible: they not only do not commute, but all their alternating compositions are nontrivial. More precisely, let $\BockMixed:=\BockEv+\BockOdd$. Since $\BockEv^2=\BockOdd^2=0$, we have that $\BockMixed\BockEv=\BockOdd\BockEv$ and $\BockMixed\BockEv=\BockEv\BockOdd$. Hence, the only nontrivial compositions of Bockstein homomorphisms of length $n$ are $\BockMixed^{n-1} \BockEv$ and $\BockMixed^{n-1} \BockOdd$, whereas $\BockMixed^n$ is their sum.

The~even Khovanov homology is multiplicative with respect to disjoint unions of links, $\EKh(L\sqcup L') \cong \EKh(L) \dertensor \EKh(L')$ \cite{KhHom} (here $\dertensor$ stands for the~derived tensor product), which implies primitivity of the~even Bockstein homomorphism $\BockEv$. The~first author has recently proved the~multiplicativity of the~odd homology \cite{KhHomTensor}, which implies primitivity of $\BockOdd$. Similar formulas hold for connected sums of knots, in which case one considers a~derived tensor product over the~algebra assigned to a~circle \cite{KhPatterns,KhHomTensor}. This allowed us to construct in Example~\ref{ex:4-bocks} a~knot with 20 crossings for which the~alternating compositions of four Bockstein homomorphisms are nontrivial.

\begin{repeattheorem}{Conjecture}{conj:Bocksteins-algebra}
	The~operations $\BockMixed^n$, $\BockMixed^{n-1}\BockEv$, and $\BockMixed^{n-1}\BockOdd$ are nonzero and pairwise different.
\end{repeattheorem}

The~two Bockstein homomorphisms generate three degree 2 operations: $\BockEv\BockOdd$, $\BockOdd\BockEv$, and the~commutator $\BockMixed^2=\BockEv\BockOdd+\BockOdd\BockEv$. It appears that none of them is the~second Steenrod square $\Sq^2$ as defined in \cite{KhSq}, because the~ranks do not match. Moreover $\BockMixed^2$ is primitive, while $\Sq^2$ is not (see~\cite[Section 4.L]{Hatcher}).

\begin{corollary}
	The~degree 2 operations $\BockEv\BockOdd$, $\BockOdd\BockEv$, and $\BockMixed^2$ are different from the~second Steenrod square $\Sq^2$.
\end{corollary}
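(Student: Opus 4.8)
The plan is to establish the corollary by a rank comparison, exploiting the two invariants of these operations that are already under our control: their ranks (on specific knots) and their primitivity behavior. The key observation is that if any of $\BockEv\BockOdd$, $\BockOdd\BockEv$, or $\BockMixed^2$ coincided with $\Sq^2$ as a natural operation on $\KhModTwo$, then in particular they would agree on every link. So it suffices to exhibit a single link $L$ on which the rank of $\Sq^2$ differs from the rank of each of the three candidate operations; any of the knots computed in Example~\ref{ex:4-bocks} (or the tables of computations referenced in the paper) will serve, since the ranks of $\BockEv$, $\BockOdd$ and their compositions are read off from the integral Khovanov homology via \cite[Proposition~3E.3]{Hatcher}, while the rank of $\Sq^2$ is computed directly from the cup-1 formula of \cite{KhSq}.

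The steps, in order: (1) Recall that $\Sq^2$ is a natural transformation $\KhModTwo^{i}(L)\to\KhModTwo^{i+2}(L)$, so an identity with any of the three degree-$2$ Bockstein compositions would have to hold functorially, hence on every link. (2) For a suitably chosen knot $L$ — one already appearing in our computations — tabulate the ranks of $\BockEv$, $\BockOdd$, $\BockEv\BockOdd$, $\BockOdd\BockEv$, and $\BockMixed^2=\BockEv\BockOdd+\BockOdd\BockEv$ in each bidegree, using that the rank of a Bockstein is the count of $\Z_2$ summands in the relevant integral homology group and that $\rk(\BockMixed^2)\le\rk(\BockEv\BockOdd)+\rk(\BockOdd\BockEv)$. (3) Compute $\rk(\Sq^2)$ for the same $L$ directly from its definition in \cite{KhSq}. (4) Observe that the two sets of ranks disagree in at least one bidegree, which rules out all three identifications at once. (5) As a separate, coefficient-free argument covering $\BockMixed^2$, note that $\BockMixed^2=\BockEv+\BockOdd$ composed appropriately is primitive — since $\BockEv$ and $\BockOdd$ are each primitive by the multiplicativity results of \cite{KhHom,KhHomTensor} cited above, and a sum and alternating composition of primitives is primitive — whereas $\Sq^2$ fails the Cartan/primitivity condition (see \cite[Section~4.L]{Hatcher}); this alone separates $\BockMixed^2$ from $\Sq^2$ without any computation.

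The main obstacle is step (2)–(3): one must have in hand an explicit link for which both the integral Khovanov homology (with its torsion) and the Steenrod square action have been computed, and verify numerically that the ranks genuinely differ. This is not deep but it is the load-bearing input; everything else is formal. It is worth remarking that the primitivity argument in step (5) is fully rigorous for $\BockMixed^2$ but does not by itself address $\BockEv\BockOdd$ or $\BockOdd\BockEv$, which need not be primitive (only their sum is), so the rank comparison is still required to handle those two; conversely, the rank comparison is the uniform argument that dispatches all three candidates simultaneously, and the primitivity remark is included mainly because it is the conceptually cleanest way to see the distinction for the commutator.
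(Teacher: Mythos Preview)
Your proposal is correct and follows the same two-pronged line as the paper: a rank mismatch (which handles all three operations at once) together with the observation that $\BockMixed^2$ is primitive while $\Sq^2$ is not. One small correction to step~(5): the reason $\BockMixed^2$ is primitive is \emph{not} that ``an alternating composition of primitives is primitive'' (that is false --- indeed you yourself note that $\BockEv\BockOdd$ and $\BockOdd\BockEv$ need not be primitive), but rather that $\BockMixed=\BockEv+\BockOdd$ is primitive and the \emph{square} of a primitive operation over $\ZmodTwo$ is again primitive, by the binomial formula~\eqref{eq:binomial-for-primitive} with $n=2$.
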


Another surprising fact is the~existence of integral lifts of both Bockstein operations. They are constructed using a~link homology $\UKh(L)$ that is defined over a~ring $\Zunfd:=\ZunfdLong$ and unifies the~even and odd Khovanov homology theories. $\EKh(L)$ and $\OKh(L)$ can be recovered from $\UKh(L)$ by taking coefficients in certain modules over $\Zunfd$ \cite{ChCob}. Namely, there are isomorphisms of graded abelian groups
\begin{equation}\label{eq:EKh-and-OKh-from-UKh}
	\EKh(L)\cong\UKh(L;\Zev)
	\qquad\text{and}\qquad
	\OKh(L)\cong\UKh(L;\Zodd),
\end{equation}
where $\Zev:=\ZevLong$ and $\Zodd:=\ZoddLong$ are $\Zunfd$--modules on which $\xi$ acts as identity or negation respectively. Both $\Zev$ and $\Zodd$ are isomorphic to $\Z$ as abelian groups.

It is not immediately clear whether $\UKh(L)$ is a stronger invariant than $\EKh(L)$ and $\OKh(L)$ together. The authors are not aware of any software package that can compute the~unified homology explicitly. This might be due to the complexity of classification of modules over the~ring $\Zunfd$. The~results of this paper indicate that $\UKh(L)$ is actually a~finer invariant than the~even and odd homology combined.

Let $D$ be a~diagram of a~link $L$ in $\mathbb R^3$. Denote by $\EKhCom(D)$, $\OKhCom(D)$, and $\UKhCom(D)$ the~Khovanov chain complexes for the~even, odd, and unified homology, respectively. We recall in Section~\ref{sec:pullback} that $\UKhCom(D)$ can be considered as an~extension of complexes $\EKhCom(D)$ and $\OKhCom(D)$ in two different ways. The~two short exact sequences
\begin{gather}
	0\to \OKhCom(D) \to \UKhCom(D) \to \EKhCom(D) \to 0 \\
	0\to \EKhCom(D) \to \UKhCom(D) \to \OKhCom(D) \to 0
\end{gather}
induce connecting homomorphisms $\BockEvOdd\colon \EKh^i(L) \to<1em> \OKh^{i+1}(L)$ and $\BockOddEv\colon \OKh^i(L) \to<1em> \EKh^{i+1}(L)$. In Section~\ref{sec:oper-integral} we show that $\BockEvOdd$ and $\BockOddEv$ are, in fact, Bockstein homomorphisms corresponding to certain short exact sequences of $\Zunfd$-modules. Moreover, they are integral lifts of the~Bockstein homomorphisms $\BockOdd$ and $\BockEv$, respectively.

\begin{repeattheorem}{Proposition}{prop:lifts-of-bocks}
	Given a~link $L$ there are commuting squares
	\begin{equation*}
		\tag{\ref{diag:lifts-of-bocks}}
		\begin{diagps}(0em,-0.8ex)(24em,12.25ex)
			\square<8em,10ex>[%
				\EKh^i(L)`\OKh^{i+1}(L)`\KhModTwo^i(L)`\KhModTwo^{i+1}(L);
				\BockEvOdd```\BockOdd
			]
			\square(16em,0ex)<8em,10ex>[%
				\OKh^i(L)`\EKh^{i+1}(L)`\KhModTwo^i(L)`\KhModTwo^{i+1}(L);
				\BockOddEv```\BockEv
			]
		\end{diagps}
	\end{equation*}
\end{repeattheorem}

Similarly to the~Bockstein operations over $\ZmodTwo$, we expect that the~alternating compositions of the integral operations do not vanish, see Conjecture~\ref{conj:int-Bockstein-algebra}.

Computer-based calculations reveal pairs of knots that have the same homology groups (both odd and even), but different actions of the~integral operation, see Section~\ref{sec:computation}. This implies that the~unified link homology $\UKh(L)$ is a~stronger invariant than $\EKh(L)\oplus\OKh(L)$. In particular, $\UKhCom(L)$ is a~nontrivial extension of $\EKhCom(L)$ and $\OKhCom(L)$.

\begin{remark}
	Most of the~constructions described in this paper can be applied to arbitrary complexes over the~ring $\Zunfd$, not necessarily those arising from links. Indeed, with every such a~complex $C$ we can associate its \quot{even} and \quot{odd} integral versions, $C_e:=C\tensor\Zev$ and $C_o:=C\tensor\Zodd$, that have the same reductions modulo two $C_{\Z_2}:=C_e\tensor\ZmodTwo\cong C_o\tensor\ZmodTwo$. The~differentials in $C_e$ and $C_o$ induce two Bockstein homomorphisms on $\Kh(C_{\Z_2};\Z_2)$, each admitting an~integral lift.
\end{remark}

\subsection*{Outline}
This paper is organized as follows. We begin with a~pullback description of the~ring $\Zunfd$ and the~unified Khovanov homology $\UKh(L)$. It provides a~neat way to see the~even and odd Khovanov complexes as subcomplexes and quotient complexes of $\UKhCom(L)$ at the~same time. Section~\ref{sec:oper-mod-2} describes the~construction of Bockstein operations for homology with coefficients in $\ZmodTwo$. We present examples of knots for which compositions of up to four Bocksteins are nontrivial to support Conjecture~\ref{conj:Bocksteins-algebra}. Integral Bockstein operations and the integral analog of Conjecture~\ref{conj:Bocksteins-algebra} are treated in Section~\ref{sec:oper-integral}. Section~\ref{sec:reduced} contains a~brief discussion of the~case of the~reduced Khovanov homology. The results of computer-based calculation are presented in Section~\ref{sec:computation}.

\begin{table}[ht]
\begin{tabular}{l|cccc}
	\hline
	& \parbox{2cm}{\centering even\vphantom{d}}
	& \parbox{2cm}{\centering odd}
	& \parbox{2cm}{\centering unified}
	& \parbox{2cm}{\centering\singlespace reduction\\[-0.5ex] modulo 2}\rule[-2.2ex]{0pt}{5.2ex} \\
	\hline
	chain complex
			& $\EKhCom(D)$ & $\OKhCom(D)$ & $\UKhCom(D)$ & $\KhComModTwo(D)$ \\
	differential
			& $\diffEv$    & $\diffOdd$   & $\diffUnfd$  & $\diffModTwo$     \\
	homology
			& $\EKh(L)$    & $\OKh(L)$    & $\UKh(L)$    & $\KhModTwo(L)$    \\
	\hline
\end{tabular}
\bigskip
\caption{Notations for different versions of the Khovanov chain complexes and their homology for a diagram $D$ of a link $L$.}
\label{tab:notations}
\end{table}

\subsection*{Summary of notation}
In this paper, for every diagram $D$ of a~link $L$ we assign a~number of different chain complexes and their homology. The notation used is listed in Table~\ref{tab:notations}. We also define several homological operations on different versions of Khovanov homology. For convenience, we list pages on which they are introduced  in Table~\ref{tab:oper-list}.

\begin{table}[ht]
\begin{align*}
  \BockEv&\colon\KhModTwo^i(L)\to\KhModTwo^{i+1}(L) && \text{even Bockstein homomorphism} && \text{p.~\pageref{def:even-bock}} \\
	\BockOdd&\colon\KhModTwo^i(L)\to\KhModTwo^{i+1}(L) && \text{odd Bockstein homomorphism}  && \text{p.~\pageref{def:odd-bock}}  \\
	\BockMixed&\colon\KhModTwo^i(L)\to\KhModTwo^{i+1}(L) && \text{mixed Bockstein homomorphism}&& \text{p.~\pageref{def:mixed-bock}}\\
	\BockOddEv&\colon\OKh^i(L)\to\EKh^{i+1}(L) && \text{integral even Bockstein homomorphism} && \text{p.~\pageref{def:odd-ev-bock}}\\
	\BockEvOdd&\colon\EKh^i(L)\to\OKh^{i+1}(L) && \text{integral odd Bockstein homomorphism} && \text{p.~\pageref{def:ev-odd-bock}}\\
	 \BockSqEv&\colon\EKh^i(L)\to\EKh^{i+2}(L) && \text{integral even degree 2 operation}&& \text{p.~\pageref{def:ev-sq-bock}} \\
	\BockSqOdd&\colon\OKh^i(L)\to\OKh^{i+2}(L) && \text{integral odd degree 2 operation} && \text{p.~\pageref{def:odd-sq-bock}}
\end{align*}
\caption{List of homological operations together with pages on which they are introduced.}
\label{tab:oper-list}
\end{table}

\subsection*{Acknowledgements}
We are grateful to Mikhail Khovanov for numerous stimulating discussions. The first author is also thankful to Stefan Friedl for his insightful remarks during the SwissKnots~2011 conference.

\section{A~pullback description of the~unified homology}\label{sec:pullback}

\wrapfigure[r]<0>{\begin{diagps}(-4.5em,-7ex)(4.7em,9.5ex)
	\node pi(0,7ex)[\Zunfd]
	\node even(-3.5em,0ex)[\Zev]
	\node odd(3.5em,0ex)[\Zodd]
	\node Z2(0,-7ex)[\ZmodTwo]
	\arrow|b|{->>}[pi`even;\oddParameter\mapsto<1em> 1]		\arrow{->>}[pi`odd;\oddParameter\mapsto<1em>-1]
	\arrow{->>}[even`Z2;]		\arrow{->>}[odd`Z2;]
\end{diagps}}
As mentioned in the~introduction, $\Zunfd=\ZunfdLong$ is the~universal ring of coefficients in our framework. The~rings $\Zev$ and $\Zodd$ arise as quotients of $\Zunfd$, and both project to $\ZmodTwo$ in a~unique way. Notice, that $\ZmodTwo$ admits a~unique $\Zunfd$--module structure, as $\oddParameter$ is invertible. Thence we obtain a~commuting square diagram shown to the~right. In fact, it is a~pullback square in the~category of rings.

\begin{lemma}
	There is an~isomorphism of rings $\Zunfd\cong\{(a,b)\in\Z^2\ |\ a\equiv b\mod 2\}$, such that the~projections on the~first and on the~second factors are exactly $\Zev$ and $\Zodd$.
\end{lemma}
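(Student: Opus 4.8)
The plan is to realise the isomorphism as the pair of canonical quotient maps appearing in the pullback square above. First I would define the ring homomorphism
\[
	\varphi\colon \Zunfd \longrightarrow \Z\times\Z,
	\qquad
	\varphi\bigl(f(\oddParameter)\bigr) = \bigl(f(1),\, f(-1)\bigr),
\]
which is well defined because both $1$ and $-1$ are roots of $\oddParameter^2-1$, so $f(1)$ and $f(-1)$ depend only on the class of $f$ modulo $(\oddParameter^2-1)$. Equivalently, $\varphi$ is the product of the two projections $\Zunfd\twoheadrightarrow\Zunfd/(\oddParameter-1)=\Zev$ and $\Zunfd\twoheadrightarrow\Zunfd/(\oddParameter+1)=\Zodd$, so it is automatically a ring homomorphism whose two coordinate components are, by construction, exactly the maps onto $\Zev$ and $\Zodd$.

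Next I would use that $\{1,\oddParameter\}$ is a $\Z$-basis of $\Zunfd$: a general element is $a+b\oddParameter$ with $a,b\in\Z$, and $\varphi(a+b\oddParameter)=(a+b,\,a-b)$. Injectivity is then immediate, since $a+b=a-b=0$ forces $a=b=0$ (the change-of-basis matrix $\left(\begin{smallmatrix}1&1\\1&-1\end{smallmatrix}\right)$ has determinant $-2$, hence is injective over $\Z$). For the image, the difference of the two coordinates of $\varphi(a+b\oddParameter)$ equals $2b$, so every element of $\im\varphi$ lies in $R:=\{(x,y)\in\Z^2 : x\equiv y \bmod 2\}$. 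Conversely, given $(x,y)\in R$, both $x+y$ and $x-y$ are even, so $a:=\tfrac12(x+y)$ and $b:=\tfrac12(x-y)$ are integers with $\varphi(a+b\oddParameter)=(x,y)$. Hence $\varphi$ restricts to a ring isomorphism $\Zunfd\xrightarrow{\ \sim\ }R$, and under this identification the projections of $R$ onto its first and second factors are precisely the quotient maps onto $\Zev$ and $\Zodd$, as claimed.

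I do not expect any genuine obstacle here; the statement is elementary once $\varphi$ is written down. The only points requiring a little care are bookkeeping ones: confirming that the coordinate components of $\varphi$ really are the canonical quotient maps $\Zunfd\to\Zev$ and $\Zunfd\to\Zodd$ (so that the last clause holds on the nose, not merely up to an automorphism of $\Z\times\Z$), and, if one also wants the stronger assertion that this is the pullback square of $\Zev\to\ZmodTwo\leftarrow\Zodd$, checking that $R$ is exactly that fibre product — which is again just the parity condition $x\equiv y\bmod 2$. Everything else is a one-line computation in the basis $\{1,\oddParameter\}$.
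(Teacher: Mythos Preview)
Your proof is correct and follows essentially the same route as the paper: define the map $a+b\oddParameter\mapsto(a+b,\,a-b)$, check injectivity from $a+b=a-b=0\Rightarrow a=b=0$, and surjectivity onto the parity subring via $a=\tfrac{x+y}{2}$, $b=\tfrac{x-y}{2}$. You add a little extra framing (the evaluation description of $\varphi$ and the change-of-basis matrix), but the argument is the same.
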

\begin{proof}
	The~desired isomorphism maps $1$ to $(1,1)$ and $\oddParameter$ to $(1,-1)$. This map is injective, since $(a+b,a-b)=(0,0)$ implies $a=b=0$,
	and surjective, as $(a,b)$ with $a\equiv b\mod 2$ is an~image of $\tfrac{a+b}{2} + \tfrac{a-b}{2}\oddParameter$. To finish the~proof, notice that the~action of $\oddParameter$ preserves the~first factor, but negates the~second.
\end{proof}

\begin{corollary}
	Given a~link diagram $D$, the~chain complex $\UKhCom(D)$ is a~pullback of the~even and odd Khovanov complexes over their reductions modulo 2. In particular, it is a~subcomplex of the~direct sum $\EKhCom(D)\oplus\OKhCom(D)$.
\end{corollary}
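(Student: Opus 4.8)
The plan is to leverage the preceding lemma, which identifies $\Zunfd$ with the fiber product $\{(a,b)\in\Z^2 : a\equiv b\bmod 2\}$, and apply this identification level-wise (i.e.\ object-wise in the category of complexes). The unified complex $\UKhCom(D)$ is built freely from the cube of resolutions of $D$: in each homological degree it is a free $\Zunfd$--module, and the differential $\diffUnfd$ is $\Zunfd$--linear. Tensoring with the $\Zunfd$--modules $\Zev$ and $\Zodd$ yields the even and odd complexes by~\eqref{eq:EKh-and-OKh-from-UKh}, and tensoring further down to $\ZmodTwo$ yields $\KhComModTwo(D)$; the two composite maps $\UKhCom(D)\to\KhComModTwo(D)$ agree because $\ZmodTwo$ is the pushout (equivalently, the common quotient) of $\Zev$ and $\Zodd$ in the pullback square displayed to the right above.

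The key steps, in order, are as follows. First I would observe that for a free $\Zunfd$--module $F$, the lemma gives $F\cong F\tensor\Zunfd\cong F_e\times_{F_{\ZmodTwo}} F_o$ naturally, where $F_e:=F\tensor\Zev$, $F_o:=F\tensor\Zodd$, and $F_{\ZmodTwo}:=F\tensor\ZmodTwo$; concretely an element of $F$ is recovered from the pair of its images in $F_e$ and $F_o$ precisely because the congruence condition $a\equiv b\bmod 2$ is exactly the condition that the two images agree after reduction modulo~$2$. Second, since $\diffUnfd$ is $\Zunfd$--linear it commutes with all three base-change maps, so these object-wise isomorphisms assemble into an isomorphism of chain complexes $\UKhCom(D)\cong\EKhCom(D)\times_{\KhComModTwo(D)}\OKhCom(D)$, which is the assertion that $\UKhCom(D)$ is the pullback of $\EKhCom(D)$ and $\OKhCom(D)$ over their common mod-$2$ reduction. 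Third, the pullback of two modules over a third is by definition a submodule of the direct sum (the kernel of the difference of the two projections $\EKhCom(D)\oplus\OKhCom(D)\to\KhComModTwo(D)$), and this submodule is a subcomplex because the difference map is a chain map; this gives the final claim that $\UKhCom(D)\hookrightarrow\EKhCom(D)\oplus\OKhCom(D)$.

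I do not expect a genuine obstacle here: the statement is essentially the translation of the ring-level pullback lemma to complexes, and the only thing to be careful about is that the identification $F\cong F_e\times_{F_{\ZmodTwo}} F_o$ is \emph{natural} in $F$ (so that it respects differentials) and that base change along $\Zunfd\to\Zev$, $\Zunfd\to\Zodd$, $\Zunfd\to\ZmodTwo$ is exact on the free modules appearing here (which is automatic, as $\Zev,\Zodd$ are flat over $\Zunfd$ by the lemma and the modules in question are free). The mild point worth a sentence is that the construction of $\UKhCom(D)$ as a complex of \emph{free} $\Zunfd$--modules with $\Zunfd$--linear differential — which is what makes the argument run — is part of the definition recalled at the start of Section~\ref{sec:pullback}; once that is granted, the corollary is immediate.
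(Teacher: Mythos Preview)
Your proposal is correct and follows essentially the same approach as the paper: both use that $\UKhCom(D)$ consists of free $\Zunfd$--modules, so that tensoring with the pullback square $\Zunfd\cong\Zev\times_{\ZmodTwo}\Zodd$ from the preceding lemma yields the pullback description of the complex. The paper phrases this as ``the functor $M\mapsto\UKhCom(D)\tensor M$ is exact, hence preserves pullbacks,'' whereas you spell out the level-wise identification and its naturality explicitly; these are the same argument at different levels of detail.
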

\begin{proof}
	The~chain complex $\UKhCom(D)$ is a~sequence of free $\Zunfd$--modules, so that the~functor $M\mapsto\UKhCom(D)\tensor M$, where $M$ runs over $\Zunfd$--modules, is exact. In particular, it preserves pullbacks.
\end{proof}

According to the~corollary above, we can regard $\UKh(L)$ as a~derived pullback of the~diagram
\begin{equation}
	\EKh(L)\to/->/\KhModTwo(L)\to/<-/\OKh(L).
\end{equation}
Furthermore, the~kernel of the~projection $\UKhCom(D)\to\EKhCom(D)$ is the~subcomplex of $\OKhCom(D)$ formed by elements divisible by 2, which is isomorphic to the~odd Khovanov complex. Likewise, the~kernel of $\UKhCom(D)\to\OKhCom(D)$ is isomorphic to $\EKhCom(D)$. Hence, there are short exact sequences
\begin{gather}
\label{ses:Kh-odd-pi-even}
	0\to\OKhCom(D)\to\UKhCom(D)\to\EKhCom(D)\to 0,\mathrlap{\text{ and}}\\
\label{ses:Kh-even-pi-odd}
	0\to\EKhCom(D)\to\UKhCom(D)\to\OKhCom(D)\to 0,
\end{gather}
so that $\UKhCom(D)$ is an~extension between the~two chain complexes. We shall show in the~subsequent sections that its homology is a~stronger invariant than both even and odd homology together. However, the~difference is very subtle.

\begin{proposition}\label{prop:inv-of-unfd}
	Let $f\colon\EKhCom(L)\to\EKhCom(L')$ and $g\colon\OKhCom(L)\to\OKhCom(L')$ be quasi-isomorphisms\footnote{
		A~chain map is a~\emph{quasi-isomorphism} if it induces an~isomorphism on homology.
	}
	that agree modulo 2. Then $\UKh(L)\cong\UKh(L')$.
\end{proposition}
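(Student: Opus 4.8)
The plan is to exploit the pullback description from the preceding corollary: $\UKhCom(L)$ sits inside $\EKhCom(L)\oplus\OKhCom(L)$ as the subcomplex of pairs that agree modulo $2$, i.e. $\UKhCom(L)$ is the fibered product $\EKhCom(L)\times_{\KhComModTwo(L)}\OKhCom(L)$, and likewise for $L'$. Since $f$ and $g$ agree modulo $2$, they induce a chain map $\bar f=\bar g\colon\KhComModTwo(L)\to\KhComModTwo(L')$, and the triple $(f,g,\bar f)$ is compatible with the two projections to $\KhComModTwo$. Hence the pair $(f,g)$ restricts to a chain map $h\colon\UKhCom(L)\to\UKhCom(L')$ on the fibered products. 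It then suffices to show that $h$ is a quasi-isomorphism, because a quasi-isomorphism of complexes of (free, or at least flat) $\Zunfd$-modules induces an isomorphism on homology $\UKh(L)\cong\UKh(L')$; the complexes are bounded and levelwise free, so no further care is needed.

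First I would set up, for each of $L$ and $L'$, the short exact sequence of complexes
\begin{equation*}
	0\to\UKhCom\to\EKhCom\oplus\OKhCom\to\KhComModTwo\to 0,
\end{equation*}
where the right map is the difference of the two reduction maps (surjective because each reduction is). The triple $(h,\ f\oplus g,\ \bar f)$ forms a morphism between these two short exact sequences; this commutativity is exactly the statement that $f$ and $g$ agree modulo $2$. Passing to the long exact sequences in homology and applying the five lemma, the middle map $f\oplus g$ is a quasi-isomorphism by hypothesis, and the right map $\bar f=\bar g$ is a quasi-isomorphism because $f$ is (reducing a $\Z$-quasi-isomorphism of free bounded complexes modulo $2$ preserves the quasi-isomorphism property — this is the universal coefficient / five-lemma argument applied to $0\to\EKhCom\to\EKhCom\to\KhComModTwo\to0$, or simply the fact that $f$ being a quasi-isomorphism of bounded complexes of free abelian groups is a chain homotopy equivalence, hence remains one after $\otimes\,\ZmodTwo$). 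The five lemma then forces $h$ to be a quasi-isomorphism.

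The only genuine subtlety — and the step I would be most careful about — is justifying that $\bar f$ is a quasi-isomorphism mod $2$: a quasi-isomorphism over $\Z$ need not stay one after base change in general, so one must invoke that $\EKhCom(L)$ and $\EKhCom(L')$ are bounded complexes of finitely generated free abelian groups, so that $f$ is a chain homotopy equivalence over $\Z$, and chain homotopy equivalences are preserved by any base change. Equivalently one can cite the five lemma on the mod-$2$ reduction sequence directly. Everything else is diagram chasing: constructing $h$ on the fibered product, checking the two squares of short exact sequences commute, and applying the five lemma. A clean alternative phrasing avoids the explicit sequence above by noting $\UKhCom$ is the homotopy fiber product (up to the flatness already recorded), so that $h$ is automatically a quasi-isomorphism once its three ingredients are; I would present the five-lemma version as it is the most elementary and self-contained.
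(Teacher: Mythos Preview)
Your argument is correct, and the construction of the map $h=(f,g)$ is exactly what the paper does. The difference lies in which short exact sequence you feed to the five lemma. You use
\[
0\to\UKhCom\to\EKhCom\oplus\OKhCom\to\KhComModTwo\to 0,
\]
which forces you to check that the mod-$2$ reduction $\bar f$ is a quasi-isomorphism; you handle this correctly via the chain-homotopy-equivalence argument, but it is the extra step you yourself flag as the ``only genuine subtlety.'' The paper instead uses the extension sequence \eqref{ses:Kh-odd-pi-even},
\[
0\to\OKhCom\to\UKhCom\to\EKhCom\to 0,
\]
so that the outer maps in the five-lemma diagram are $g$ and $f$ themselves, already assumed to be quasi-isomorphisms. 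This sidesteps the mod-$2$ issue entirely and makes the proof a one-liner. Your route has the minor advantage of being the ``canonical'' pullback sequence, but the paper's choice is cleaner here precisely because the hypotheses are stated for $f$ and $g$ directly.
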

\begin{proof}
	The~pullback $(f,g)$ of the~chain maps $f$ and $g$ is the~desired quasi-isomorphism, which follows from the~5--lemma applied to the~exact sequence \eqref{ses:Kh-odd-pi-even}.
\end{proof}

\section{Bockstein operations in Khovanov homology}\label{sec:oper-mod-2}

Since the~odd and even differentials agree modulo 2, there are at least three
Bockstein operations on $\KhModTwo(L)$:
\begin{enumerate}

	\item\label{def:even-bock}
	the~\emph{even Bockstein}, $\BockEv\colon\KhModTwo^i(L)\to\KhModTwo^{i+1}(L)$, $\BockEv[x] = \left[\tfrac{1}{2}\diffEv x\right]$,
	
	\item\label{def:odd-bock}
	the~\emph{odd Bockstein}, $\BockOdd\colon\KhModTwo^i(L)\to\KhModTwo^{i+1}(L)$, $\BockOdd[x] = \left[\tfrac{1}{2}\diffOdd x\right]$, and
	
	\item\label{def:mixed-bock}
	the~\emph{mixed Bockstein}, $\BockMixed:=\BockEv+\BockOdd$.

\end{enumerate}
The~last one arises from the~short exact sequence of coefficients
\begin{equation}
	0\to\ZmodTwo\to\ZmodTwo[\oddParameter]/(\oddParameter^2-1)\to\ZmodTwo\to 0,
\end{equation}
where the~inclusion of $\ZmodTwo$ maps $1$ into $1+\oddParameter$. Indeed,
from the~pullback description of $\UKhCom(L)$ we have $\diffUnfd =
\frac{1}{2}(\diffEv+\diffOdd) + \frac{\oddParameter}{2}(\diffEv-\diffOdd) =
\frac{1+\oddParameter}{2}(\diffEv+\diffOdd)$, so that $\BockMixed[x] =
\left[\frac{1}{2}\diffUnfd x\right] = \left[\frac{1}{2}\diffEv x +
\frac{1}{2}\diffOdd x\right]$. To see that there is nothing more, use the~free
resolution
\begin{equation}
  \rule{0pt}{4ex}
	0 \to/<-/\ZmodTwo
	  \to/<-/\Zunfd
		\to/<-/^{\begin{bsmallmatrix}1+\oddParameter & 1-\oddParameter\end{bsmallmatrix}} \Zunfd^2
		\to/<-/^{\begin{bsmallmatrix}1-\oddParameter & 0 \\ 0 & 1+\oddParameter\end{bsmallmatrix}} \Zunfd^2
		\to/<-/^{\begin{bsmallmatrix}1+\oddParameter & 0 \\ 0 & 1-\oddParameter\end{bsmallmatrix}} \Zunfd^2
		\to/<-/\ldots
\end{equation}
to compute $\Ext^1_{\Zunfd}(\ZmodTwo,\ZmodTwo) \cong \ZmodTwo\oplus\ZmodTwo$. In particular, the~sum of any two of the~three Bocksteins results in the~third.

Choose a~chain complex $C$ with a~differential of degree $1$. Due to
the~Universal Coefficient Theorem, the~homology of $C$ with coefficients in
$\ZmodTwo$ is given as the~direct sum $H^i(C,\ZmodTwo)\cong
H^i(C)\tensor\ZmodTwo \oplus \Tor(H^{i+1}(C),\ZmodTwo)$. Let
$u\in\Tor(H^{i+1}(C),\ZmodTwo)$ be an element of order~$2$ that arises from an
invariant factor of $H^{i+1}(C)$ that is isomorphic to $\Z_2$. Then
the~Bockstein homomorphism on $H^i(C,\ZmodTwo)$ pairs $u$ with the~modulo 2
reduction of the~homology class that $u$ comes from. It sends every element 
of $H^i(C,\ZmodTwo)$ that does not arise in such a way to zero,
see~\cite[Proposition~3E.3]{Hatcher}.

\begin{example}\label{ex:odd-vs-even-Bock}
The~odd Khovanov homology of alternating links has no torsion~\cite{OddKhHom}, so that the~odd Bockstein is trivial. On the~other hand, the~even Khovanov homology of any torus knot $T_{2,n}$ contains a~$\ZmodTwo$ summand~\cite{KhHom}, and the~even Bockstein does not vanish.
\end{example}

The~following result follows directly from the~definition of the~operations.

\begin{proposition}\label{prop:three-operations}
	The~odd and even Bockstein homomorphisms are differentials, i.e.\ $\BockEv^2=\BockOdd^2 = 0$ and $\BockMixed^2=[\BockEv,\BockOdd]$ is their commutator.
\end{proposition}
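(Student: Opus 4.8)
The plan is to verify everything by direct computation with the chain-level formulas for the Bockstein operations given just above, namely $\BockEv[x] = [\tfrac12\diffEv x]$ and $\BockOdd[x] = [\tfrac12\diffOdd x]$ for a cycle $x$ in $\KhComModTwo(D)$. The key algebraic inputs are that $\diffEv^2 = 0$ and $\diffOdd^2 = 0$ as integral differentials, together with the fact that $\diffEv \equiv \diffOdd \pmod 2$, so $\tfrac12(\diffEv - \diffOdd)$ is an integral chain map.

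First I would establish $\BockEv^2 = 0$. Take a class $[x]\in\KhModTwo^i(L)$ represented by an integral chain $x$ with $\diffModTwo x = 0$, i.e.\ $\diffEv x = 2y$ for some integral chain $y$; then $\BockEv[x] = [y \bmod 2]$. To apply $\BockEv$ again I note $\diffEv y = \tfrac12\diffEv(2y) = \tfrac12\diffEv\diffEv x = 0$ as an integral chain, so $\BockEv[y\bmod 2] = [\tfrac12\diffEv y\bmod 2] = 0$. The same argument with $\diffOdd$ in place of $\diffEv$ gives $\BockOdd^2 = 0$. Since $\BockMixed = \BockEv + \BockOdd$ and we are working mod $2$, squaring gives $\BockMixed^2 = \BockEv^2 + \BockEv\BockOdd + \BockOdd\BockEv + \BockOdd^2 = \BockEv\BockOdd + \BockOdd\BockEv = [\BockEv,\BockOdd]$, which is the asserted identity; this step is purely formal once the two vanishing statements are in hand.

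The one point requiring a little care — and the place where I expect the only real friction — is checking that the composite $\BockEv\BockOdd$ (and likewise $\BockOdd\BockEv$) is actually well defined as a composition of the stated chain-level maps, i.e.\ that $\BockOdd$ lands in $\KhModTwo^{i+1}(L)$ in a way that feeds into $\BockEv$: one must confirm that if $x$ is a mod-$2$ cycle with $\diffOdd x = 2y$, then $y$ is again a mod-$2$ cycle, which follows since $\diffOdd y = \tfrac12\diffOdd\diffOdd x = 0$ integrally, hence $\diffModTwo(y\bmod 2) = 0$. After that, everything is a bookkeeping check that the resulting map on homology is independent of the chain representative $x$ (standard: a change $x \mapsto x + \diffModTwo z$ changes $y$ by an integral boundary plus a multiple of $2$, which does not affect $[\tfrac12\diffEv y \bmod 2]$), so there is no genuine obstacle — the proposition is, as the text says, essentially immediate from the definitions.
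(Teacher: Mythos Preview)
Your proof is correct and matches the paper's approach: the paper offers no explicit proof, merely stating that the result ``follows directly from the~definition of the~operations,'' and your chain-level computation is precisely the direct verification this remark invites. The well-definedness check you include is harmless but unnecessary, since $\BockEv$ and $\BockOdd$ are already known to be well-defined connecting homomorphisms for short exact sequences of coefficients; the only content is $\BockEv^2=\BockOdd^2=0$ (your argument via $\diffEv^2=\diffOdd^2=0$) and the formal expansion of $(\BockEv+\BockOdd)^2$ over $\ZmodTwo$.
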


Thence, there are at most three nontrivial operations in each degree that are generated by the~Bocksteins: the~alternating compositions $\smash{\underbrace{\cdots\BockOdd\BockEv}_n = \BockMixed^{n-1}\BockEv}$ and $\smash{\underbrace{\cdots\BockEv\BockOdd}_n = \BockMixed^{n-1}\BockOdd}$, together with their sum $\BockMixed^n$.

\begin{figure}[t]
	\centering
	\begingroup
\renewcommand\arraystretch{1}%
\def\Z{$\mathbb Z$}%
\def\F{$\mathbb Z_2$}%
\def\P{\phantom{\F}}%
\begin{tabular}{||r||c|c|c|c|c|c||}
	\hhline{|=======|}
	$\EKh$& 0& 1& 2& 3& 4& 5\\ \hhline{#=#=|=|=|=|=|=#}
	17 &\P&\P&\P&  &\P&\Z\\ \hline
	15 &  &  &  &  &  &\Z\\ \hline
	13 &  &  &  &\Z&\Z&\P\\ \hline
	11 &  &  &  &\F&\Z&  \\ \hline
	 9 &  &  &\Z&  &  &  \\ \hline
	 7 &\Z&  &  &  &  &  \\ \hline
	 5 &\Z&  &  &  &  &  \\
	\hhline{|=======|}
\end{tabular}\endgroup

	\qquad
	\begingroup
\renewcommand\arraystretch{1}%
\def\z{$\mathbb Z$}%
\def\F{$\mathbb Z_2$}%
\def\P{\phantom{\F}}%
\begin{tabular}{||r||c|c|c|c|c|c||}
	\hhline{|=======|}
	$\OKh$& 0& 1& 2& 3& 4&  5   \\ \hhline{#=#=|=|=|=|=|=#}
	17 &\P&\P&\P&\P&  &$\Z$  \\ \hline
	15 &  &  &  &  &  &$\Z\oplus\Z_3$\\ \hline
	13 &  &  &  &  &\F&$\Z_3$\\ \hline
	11 &  &  &\z&  &\F&      \\ \hline
	 9 &  &  &\z&  &  &      \\ \hline
	 7 &\z&  &  &  &  &      \\ \hline
	 5 &\z&  &  &  &  &      \\
	\hhline{|=======|}
\end{tabular}\endgroup

	\par\bigskip
	\begingroup
\renewcommand\arraystretch{1.2}%
\def\F{$\ \ZmodTwo\ $}%
\def\R#1{\Rnode{#1}{\F}}%
\def\P{\phantom{\F}}%
\begin{tabular}{||r||c|c|c|c|c|c||}
	\hhline{|=======|}
	$\KhModTwo$& 0& 1& 2      & 3      & 4      & 5\\ \hhline{#=#=|=|=|=|=|=#}
	17 &  &  &        &        &        &\F\\ \hline
	15 &  &  &        &        &        &\F\\ \hline
	13 &  &  &        &\R{3-13}&\R{4-13}&  \\ \hline
	11 &  &  &\R{2-11}&\R{3-11}&\R{4-11}&  \\ \hline
	 9 &  &  &\F      &        &        &  \\ \hline
	 7 &\F&  &        &        &        &  \\ \hline
	 5 &\F&\P&        &        &        &  \\
	\hhline{|=======|}
\end{tabular}%
\bockevarrow{2-11}{3-11}%
\bockoddarrow{3-11}{4-11}%
\bockoddarrow{3-13}{4-13}%
\endgroup

	\bigskip
	\caption{Even and odd Khovanov homology as well as Bockstein
	homomorphisms on the Khovanov homology over $\Z_2$ for
	the~knot $8_{19}$.}
	\label{tab:8-19}
\end{figure}

According to Example~\ref{ex:odd-vs-even-Bock} the~even and odd Bockstein homomorphisms are linearly independent. The~next two examples show the~same for $\BockOdd\BockEv$ and $\BockEv\BockOdd$.

\begin{example}
Consider the~torus knot $T_{3,4}$, labeled as $8_{19}$ in the~Rolfsen's table
\cite{Rolfsen}. Its even and odd Khovanov homology are presented in the top
two tables of Figure~\ref{tab:8-19}. An~analysis of positions of
$\ZmodTwo$ summands results in the bottom table in Figure~\ref{tab:8-19}, where
the~horizontal arrows illustrate nontrivial contributions to the~Bockstein
homomorphisms. Notice that the~composition $\BockOdd\BockEv$ does not vanish
on the~generator $u$ of $\KhModTwo^{2,11}(8_{19})\cong\Z_2$ and
$\BockMixed^2(u)\neq 0$. Therefore, the~two Bockstein homomorphisms do not
commute with each other.
\end{example}

\begin{figure}[t]
	\centering
	\begingroup
\renewcommand\arraystretch{1}%
\def\Z{$\mathbb Z$}%
\def\F{$\mathbb Z_2$}%
\def\P{}%
\begin{tabular}{||r||c|c|c|c|c|c|c|c||}
	\hhline{|=========|}
	$\EKh$& 0& 1& 2& 3& 4& 5& 6& 7\\ \hhline{#=#=|=|=|=|=|=|=|=#}
	21 &\P&\P&\P&\P&\P&\P&\P&\Z\\ \hline
	19 &  &  &  &  &  &\Z&  &\F\\ \hline
	17 &  &  &  &  &  &\Z&\Z&\P\\ \hline
  15 &  &  &  &\Z&\Z&  &  &  \\ \hline
	13 &  &  &  &\F&\Z&  &  &  \\ \hline
	11 &  &  &\Z&  &  &  &  &  \\ \hline
	 9 &\Z&  &  &  &  &  &  &  \\ \hline
	 7 &\Z&  &  &  &  &  &  &  \\
	\hhline{|=========|}
\end{tabular}\endgroup

	\qquad
	\begingroup
\renewcommand\arraystretch{1}%
\def\Z{$\mathbb Z$}%
\def\F{$\mathbb Z_2$}%
\def\G{$\mathbb Z_3$}%
\def\P{}%
\begin{tabular}{||r||c|c|c|c|c|c|c|c||}
	\hhline{|=========|}
	$\OKh$& 0& 1& 2& 3& 4& 5& 6& 7\\ \hhline{#=#=|=|=|=|=|=|=|=#}
	21 &\P&\P&\P&\P&\P&\P&  &\Z\\ \hline
	19 &  &  &  &  &  &  &\F&\Z\\ \hline
	17 &  &  &  &  &  &\G&\F&  \\ \hline
  15 &  &  &  &  &\F&\G&  &  \\ \hline
	13 &  &  &\Z&  &\F&  &  &  \\ \hline
	11 &  &  &\Z&  &  &  &  &  \\ \hline
	 9 &\Z&  &  &  &  &  &  &  \\ \hline
	 7 &\Z&  &  &  &  &  &  &  \\
	\hhline{|=========|}
\end{tabular}\endgroup

	\par\bigskip
	\begingroup
\renewcommand\arraystretch{1.2}%
\def\F{$\ \ZmodTwo\ $}%
\def\R#1{\Rnode{#1}{\F}}%
\def\P{\phantom{\F}}%
\begin{tabular}{||r||c|c|c|c|c|c|c|c||}
	\hhline{|=========|}
	 $\KhModTwo$& 0& 1& 2      & 3      & 4      & 5      & 6      & 7      \\ \hhline{#=#=|=|=|=|=|=|=|=#}
	21 &\P&\P&\P      &\P      &\P      &\P      &\P      &\F      \\ \hline
	19 &  &  &        &        &        &\R{5-19}&\R{6-19}&\R{7-19}\\ \hline
	17 &  &  &        &        &        &\R{5-17}&\R{6-17}&\P      \\ \hline
  15 &  &  &        &\R{3-15}&\R{4-15}&        &        &        \\ \hline
	13 &  &  &\R{2-13}&\R{3-13}&\R{4-13}&        &        &        \\ \hline
	11 &  &  &\F      &        &        &        &        &        \\ \hline
	 9 &\F&  &        &        &        &        &        &        \\ \hline
	 7 &\F&  &        &        &        &        &        &        \\
	\hhline{|=========|}
\end{tabular}%
\bockevarrow{2-13}{3-13}
\bockoddarrow{3-13}{4-13}%
\bockoddarrow{3-15}{4-15}%
\bockoddarrow{5-19}{6-19}%
\bockoddarrow{5-17}{6-17}%
\bockevarrow{6-19}{7-19}%
\endgroup

	\bigskip
	\caption{Even and odd Khovanov homology as well as Bockstein
	homomorphisms on the Khovanov homology over $\Z_2$ for
	the~knot $10_{124}$.}
	\label{tab:10-124}
\end{figure}

\begin{example}
The~torus knot $T_{3,5}$, labeled as $10_{124}$ in the~Rolfsen's
table~\cite{Rolfsen}, admits a~class with the~opposite property: $\BockEv\BockOdd$ does not not vanish on the~generator
$u$ of $\KhModTwo^{5,19}(10_{124})\cong\Z_2$, see Figure~\ref{tab:10-124}.
\end{example}

The~even Khovanov homology is multiplicative with respect to disjoint union and connected sum of links:
\begin{gather}
	\EKhCom(D\sqcup D') \cong \EKhCom(D) \tensor[\phantom{{}_2}\ZmodTwo] \EKhCom(D'),\\
	\EKhCom(D\# D') \cong \EKhCom(D) \tensor[A] \EKhCom(D'),
\end{gather}
where $A=\Z v_+\oplus\Z v_-$ is the~Frobenius algebra associated to a~circle
\cite{KhHom,KhPatterns}. In the~latter formula, we regard $\EKhCom(D)$ as
an~$A$--module with the~action of $A$ induced by the~cobordism that merges
a~circle to the~link diagram at the~place where the~connected sum is
performed, see Figure~\ref{fig:circle-merge}.

\begin{figure}[ht]
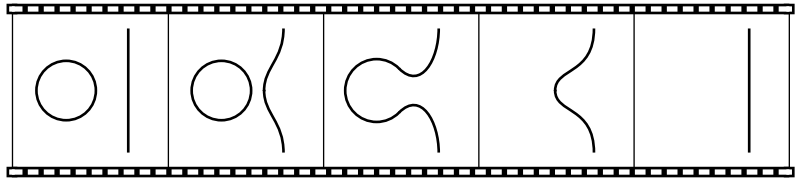

	\psset{linewidth=1pt,unit=0.0625\textwidth}
	\begin{movie}[hb](2,2){5}
		\movieclip{%
			\pscircle(-0.3,0){0.4}
			\psline(0.5,-0.8)(0.5,0.8)
		}
		\movieclip{%
			\pscircle(-0.3,0){0.4}
			\psbezier{-c}(0.5,-0.8)(0.5,-0.4)(0.25,-0.3)(0.25,0)
			\psbezier{-c}(0.5, 0.8)(0.5, 0.4)(0.25, 0.3)(0.25,0)
		}
		\movieclip{%
			\psarc(-0.3,0){0.405}{45}{315}
			\psbezier{-c}(0.5,-0.8)(0.5,-0.4)(0.27268, 0)(-0.01016,-0.28284)
			\psbezier{-c}(0.5, 0.8)(0.5, 0.4)(0.27268, 0)(-0.01016, 0.28284)
		}
		\movieclip{%
			\psbezier{-c}(0.5,-0.8)(0.5,-0.2)(0,-0.3)(0,0)
			\psbezier{-c}(0.5, 0.8)(0.5, 0.2)(0, 0.3)(0,0)
		}
		\movieclip{%
			\psline(0.5,-0.8)(0.5,0.8)
		}
	\end{movie}
	\caption{Sections of the~cobordism inducing an~action of $A$ on the~Khovanov chain complex $\EKhCom(D)$.}
	\label{fig:circle-merge}
\end{figure}

The~same formulas were recently proved for the~odd Khovanov chain
complexes~\cite{KhHomTensor}.  Since $\ZmodTwo$ is a~field, the~K\"unneth
formula identifies the~homology of the~disjoint union or connected sum as
tensor products of homology:
\begin{gather}
	\label{eq:disjsum-vs-tensor}
		\KhModTwo(L\sqcup L') \cong
			\KhModTwo(L) \tensor[\phantom{{}_2}\ZmodTwo]  \KhModTwo(L'),\\
	\label{eq:connsum-vs-tensor}
		\KhModTwo(L\# L') \cong \KhModTwo(L) \tensor[A] \KhModTwo(L').
\end{gather}
\begin{definition}
A~homological operation $\theta_L\colon \KhModTwo^i(L) \to \KhModTwo^{i+d}(L)$
of degree $d$ is said to be
\begin{itemize}
	\item \emph{$\sqcup$--primitive} if $\theta_{L\sqcup L'} =
						\theta_L \tensor[\phantom{{}_2}\ZmodTwo] \id +
						\id      \tensor[\phantom{{}_2}\ZmodTwo] \theta_{L'}$ and
	\item \emph{$\#$--primitive} if $\theta_{L\# L} =
						\theta_L \tensor[A] \id +
						\id      \tensor[A] \theta_{L'}$
\end{itemize}
for every two links $L$ and $L'$.
\end{definition}

\begin{corollary}
	All three Bockstein operations are $\sqcup$--primitive and $\#$--primitive.
\end{corollary}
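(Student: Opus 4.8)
The plan is to reduce the statement to a general fact about Bockstein homomorphisms being natural and compatible with the Künneth maps, and then apply it in the two relevant settings. First I would recall that each Bockstein $\beta_e$, $\beta_o$ (and hence their sum $\beta$) is the connecting homomorphism of a short exact sequence of chain complexes arising by tensoring a fixed short exact sequence of coefficient rings (or $\Zunfd$-modules) with the free complex $\KhComModTwo(D)$; in particular $\beta_e$ comes from $0\to\ZmodTwo\to\Z_4\to\ZmodTwo\to0$ applied to $\EKhCom(D)$, $\beta_o$ from the analogous sequence applied to $\OKhCom(D)$, and $\beta$ from $0\to\ZmodTwo\to\ZmodTwo[\oddParameter]/(\oddParameter^2-1)\to\ZmodTwo\to0$ applied to $\UKhCom(D)$. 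The key point is that the multiplicativity isomorphisms of the \emph{integral} (even, odd, unified) chain complexes with respect to $\sqcup$ and $\#$, already quoted in the text, are isomorphisms of complexes of free modules, so they stay isomorphisms after tensoring with any coefficient module; hence the short exact sequences defining the Bocksteins for $L\sqcup L'$ (resp.\ $L\# L'$) are identified, via Künneth, with the tensor product of the corresponding sequence for $L$ with the mod-$2$ complex of $L'$ (plus the symmetric term).

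The main technical step is the following algebraic lemma, which I would state and prove once: if $0\to C'\to C\to C''\to0$ is a short exact sequence of complexes of flat modules over a field (or more generally with the relevant Tor-groups vanishing) and $K$ is another such complex, then the connecting homomorphism of $0\to C'\tensor K\to C\tensor K\to C''\tensor K\to0$ is, under the Künneth isomorphism $H(C''\tensor K)\cong H(C'')\tensor H(K)$ and $H(C'\tensor K)\cong H(C')\tensor H(K)$, equal to $\delta\tensor\id$, where $\delta$ is the connecting homomorphism of the original sequence. This is a standard diagram chase on representative cocycles: lift $x\otimes k$ with $x$ a cocycle in $C''$ and $k$ a cocycle in $K$, differentiate, divide by the relevant element, and observe that the $K$-part comes along untouched because the differential on a tensor product acts by the Leibniz rule and $k$ is closed. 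Combined with the symmetry of the Künneth map under the flip $C\tensor K\cong K\tensor C$, this yields the primitivity formula $\delta_{C\tensor K}=\delta_C\tensor\id+\id\tensor\delta_K$ whenever $C$ and $K$ are themselves tensored from the same short exact sequence of coefficients, which is exactly our situation with $C=\EKhCom(D)$ (or $\OKhCom$, or $\UKhCom$) and $K=\EKhCom(D')$.

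With the lemma in hand, the proof of the corollary is short. For $\sqcup$: the isomorphism $\EKhCom(D\sqcup D')\cong\EKhCom(D)\tensor[\ZmodTwo]\EKhCom(D')$ (and its odd and unified analogues) is an isomorphism of complexes of free modules, so it is compatible with the formation of the Bockstein short exact sequences; applying the lemma and \eqref{eq:disjsum-vs-tensor} gives $\beta_{e,L\sqcup L'}=\beta_{e,L}\tensor[\ZmodTwo]\id+\id\tensor[\ZmodTwo]\beta_{e,L'}$, and likewise for $\beta_o$; summing gives the statement for $\beta$. For $\#$: one repeats the argument with the $A$-module structure, using that the merge cobordism of Figure~\ref{fig:circle-merge} is defined over $\Z$ (indeed over $\Zunfd$) and hence commutes with the coefficient-module tensor operations, so the multiplicativity isomorphism $\EKhCom(D\# D')\cong\EKhCom(D)\tensor[A]\EKhCom(D')$ is again an isomorphism of complexes of free modules intertwining the Bockstein sequences; the lemma plus \eqref{eq:connsum-vs-tensor} then yields $\#$--primitivity. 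The one genuine obstacle to watch is that the lemma must be applied with $C$ and $K$ tensored from the \emph{same} short exact sequence so that the flip identifies $\delta_C$ with $\delta_K$; this is automatic here since both factors carry the identical coefficient data, but it is the point one must check rather than assert.
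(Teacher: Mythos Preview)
The paper gives no proof of this corollary; it records it as an immediate consequence of the chain-level multiplicativity isomorphisms and the K\"unneth formulas \eqref{eq:disjsum-vs-tensor}--\eqref{eq:connsum-vs-tensor}, implicitly invoking the classical fact that a Bockstein homomorphism is a derivation on tensor products. Your write-up therefore supplies detail the paper omits, and your overall strategy---pass to the integral (resp.\ unified) complexes, use multiplicativity there, and compute with the explicit formula $\beta[x]=[\tfrac12 d\tilde x]$---is the right one.

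There is, however, a slip in the middle paragraph. Your lemma (the connecting map of $0\to C'\otimes K\to C\otimes K\to C''\otimes K\to 0$ equals $\delta\otimes\id$) is correct, but it does not combine with ``the flip'' to produce the primitivity formula. The lemma holds under the K\"unneth identification that uses honest cycles $k\in K$ with $dk=0$. In the Bockstein situation you must take $K$ to be the \emph{integral} complex $\EKhCom(D')$ (so that tensoring the sequence $0\to\EKhCom(D)/2\to\EKhCom(D)/4\to\EKhCom(D)/2\to 0$ with $K$ really is the Bockstein sequence for $D\sqcup D'$), but then read the answer through the K\"unneth over $\ZmodTwo$; the classes you care about are represented by chains $k$ that are only mod-$2$ cycles, so $dk=2k'$ rather than $dk=0$. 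When you lift $x\otimes k$ and differentiate, the Leibniz term $\tilde x\otimes dk=2(\tilde x\otimes k')$ survives and, after dividing by $2$, contributes exactly $\id\otimes\beta_{L'}$. Thus both summands of the primitivity formula fall out of a \emph{single} Leibniz computation on the integral tensor product; you cannot get them by applying the lemma once in each variable and adding the results, since those are connecting maps of two different short exact sequences. Rewriting the argument this way closes the gap, and the $\#$ case then goes through verbatim.
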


This allows us to construct knots with nontrivial compositions of four Bockstein homomorphisms.

\begin{example}\label{ex:4-bocks}
	Consider the~generators $u \in \KhModTwo^{2,13}(10_{124})$ and $u'\in\KhModTwo^{5,19}(10_{124})$. Then $\BockOdd(u)=0$, $\BockEv(u')=0$, $\BockOdd\BockEv(u)\not=0$, and $\BockEv\BockOdd(u')\not=0$ (see Fig.~\ref{tab:10-124}). It is now straightforward to verify that $u\otimes u'\in \KhModTwo^{7,32}(10_{124}\# 10_{124})$ satisfies
	\begin{equation}
		\BockOdd\BockEv\BockOdd\BockEv(u\otimes u') =
		\BockEv\BockOdd\BockEv\BockOdd(u\otimes u') =
		\BockOdd\BockEv(u)\otimes\BockEv\BockOdd(u') \neq 0.
	\end{equation}
	In particular, both compositions are nontrivial, but $\BockMixed^4(u\otimes u') = 0$.
\end{example}

The~vanishing of $\BockMixed^4$ above is not surprising---it is a~well-known fact that given a~primitive operation $\theta$, each power $\theta^{2^r}$ is also primitive. Indeed, one first computes
\begin{equation}\label{eq:binomial-for-primitive}
	\theta^n(x\otimes y) = \sum_{k=0}^n {n\choose k} \theta^{n-k}(x) \otimes \theta^k(y)
\end{equation}
and then checks that ${2^r\choose k}$ is even for all values of $k$ except $k=0$ and  $k=2^r$. In particular, $\BockMixed^{2^r}$ vanishes on the~homology of $K_1\# K_2$ if it vanishes on homology of both $K_1$ and $K_2$. A~quick look at Figure~\ref{tab:10-124} reveals that already $\BockMixed^3=0$ for the knot~$10_{124}$.

According to the~discussion after Proposition~\ref{prop:three-operations}, the~existence of nontrivial compositions of length bigger than $\ell$ implies that $\BockMixed^{\ell}\neq 0$. Hence, it is important to find knots for which higher powers of $\BockMixed$ do not vanish.

The~torus knot $T_{5,6}$ admits a~class $u\in \KhModTwo^{5,31}(T_{5,6})$, such that $\BockEv(u)=0$ but $\BockOdd\BockEv\BockOdd(u)\neq 0$. Unfortunately, larger knots are beyond the~capabilities of our computer program~\cite{KhoHo}, so that we did not succeed in finding a~knot for which $\BockMixed^4 \neq 0$. Nonetheless, we expect that there are examples of knots for which higher powers of $\BockMixed$ are nonzero.

\begin{conjecture}\label{conj:Bocksteins-algebra}
	The~operation $\BockMixed^n$ does not vanish for the~torus knot $T_{2n-1,2n}$. In particular, for every $n\in\mathbb N$ there exists a~knot $K_n$ such that the~operations $\BockMixed^n$, $\BockMixed^{n-1}\BockEv$, and $\BockMixed^{n-1}\BockOdd$ on $\KhModTwo(K_n)$ are nonzero and pairwise different.
\end{conjecture}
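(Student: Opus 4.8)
The plan is to treat the two assertions separately. The second one---existence of a knot $K_{n}$ on which $\BockMixed^{n}$, $\BockMixed^{n-1}\BockEv$ and $\BockMixed^{n-1}\BockOdd$ are nonzero and pairwise distinct---I would derive from the first. By Proposition~\ref{prop:three-operations} one has $\BockMixed^{n}=\BockMixed^{n-1}\BockEv+\BockMixed^{n-1}\BockOdd$ over $\ZmodTwo$, so $\BockMixed^{n}\neq0$ on $T_{2n-1,2n}$ already forces the two length-$n$ compositions to differ there and at least one of them to be nonzero; it remains to make \emph{both} of them nonzero. For that I would take $K_{n}$ to be a connected sum of $T_{2n-1,2n}$ with a copy of itself (or with a small knot carrying both nonzero Bocksteins), use \eqref{eq:connsum-vs-tensor} to write $\KhModTwo(K_{n})\cong\KhModTwo(T_{2n-1,2n})\tensor[A]\KhModTwo(T_{2n-1,2n})$, and expand the compositions using the $\#$--primitivity of $\BockEv,\BockOdd$ together with the binomial identity \eqref{eq:binomial-for-primitive}: the Leibniz rule mixes a $\BockEv$ on one tensor factor with a $\BockOdd$ on the other, so a length-$n$ ladder on one factor combines with a single Bockstein on the other to realize both types of length-$n$ composition on $K_{n}$, whence all three operations are pairwise distinct.

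The core is then the first assertion, $\BockMixed^{n}\neq0$ on $T_{2n-1,2n}$. The strategy is to exhibit an explicit \emph{staircase}: classes $x_{0},x_{1},\dots,x_{n}$ in consecutive homological degrees of $\KhModTwo(T_{2n-1,2n})$ on which $\BockEv$ and $\BockOdd$ act alternately---$x_{k+1}$ equals $\BockEv x_{k}$ or $\BockOdd x_{k}$ with the parity flipping at each step and the other Bockstein killing $x_{k}$---so that $\BockMixed x_{k}=x_{k+1}$ and hence $\BockMixed^{n}x_{0}=x_{n}\neq0$. Figures~\ref{tab:8-19} and~\ref{tab:10-124} exhibit exactly the $n=2$ and (part of the) $n=3$ instances, and the content of the conjecture is that this ladder persists and lengthens. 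By \cite[Proposition~3E.3]{Hatcher}, $\BockEv$ (resp.\ $\BockOdd$) on $\KhModTwo$ is the pairing attached to the $\ZmodTwo$ invariant factors of $\EKh(T_{2n-1,2n})$ (resp.\ of $\OKh(T_{2n-1,2n})$), so building the staircase is the same as locating an interleaved ladder of $\ZmodTwo$ summands, alternately in the even and in the odd integral Khovanov homology, along a controlled range of quantum gradings. There are two routes. One is to assemble this from the structure of the even Khovanov homology of torus knots in the stable range, via recursive skein exact triangles and Sto\v{s}i\'c--type stability for torus links where the $2$--torsion can be pinned down, from the analogous recursions and stability for the odd theory (using the multiplicativity results of \cite{KhHomTensor}), and from the comparison of the two through their common reduction $\KhComModTwo$. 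The second, more promising route is to run the induction directly in the \emph{unified} complex $\UKhCom(T_{2n-1,2n})$, which was built precisely to record how even and odd torsion interlock: resolving one crossing of the $(2n-1,2n)$ torus braid gives a mapping-cone relation between $\UKhCom(T_{2n-1,2n})$ and the unified complex of a torus knot or link with fewer crossings, and one shows that on homology over $\Zunfd$ this lengthens the relevant cyclic torsion summand---that $\UKh(T_{2n-1,2n})$ carries a summand $\Zunfd/\bigl(2^{a},\,2^{b}(1+\xi)\bigr)$ whose associated graded for the $(2,1+\xi)$--adic filtration is the staircase---so that the even and odd specializations of \eqref{eq:EKh-and-OKh-from-UKh} reproduce the two interleaved $\ZmodTwo$ ladders. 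The short exact sequences \eqref{ses:Kh-odd-pi-even}, \eqref{ses:Kh-even-pi-odd} and Proposition~\ref{prop:inv-of-unfd} are the bookkeeping tying the specializations to the mod-$2$ Bocksteins.

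The hard part---and the reason this is only a conjecture---is the computational input: the Khovanov homology of the two-parameter torus family $T_{p,q}$ is not known in closed form once $p,q\geq5$, and even where it is, it is the \emph{torsion} and the precise module structure, not the Poincar\'e polynomial, that one needs, since the Bockstein actions are invisible to ranks. Connected sums cannot substitute for an infinite family here: a fixed seed knot on which $\BockMixed^{h}\neq0$ but $\BockMixed^{h+1}=0$ never yields, under iterated connected sum, a nontrivial composition of length greater than the smallest power of $2$ exceeding $h$ (by Kummer's theorem applied to the multinomial coefficients appearing in the iterated form of \eqref{eq:binomial-for-primitive}), so the torus knots, or some comparable infinite family, are genuinely necessary. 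Thus the plan really depends on either a sufficiently explicit description of the stable even and odd Khovanov homology of torus knots together with their $\ZmodTwo$ Bocksteins, or on making the inductive step in $\UKhCom$ self-contained enough to propagate only the length-$n$ staircase without computing the rest of the homology---the latter being the more realistic hope, though it still requires understanding exactly how a single crossing resolution perturbs the interlocked even--odd $2$--torsion, which is precisely the delicate phenomenon that $\UKhCom$ was designed to encode.
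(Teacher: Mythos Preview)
The statement you are attempting to prove is labeled a \emph{conjecture} in the paper, and the paper offers no proof of it; it presents only computational evidence for $n\le 3$ (Examples in Section~\ref{sec:oper-mod-2} and the remark about $T_{5,6}$) together with an explanation of why connected sums of a fixed seed cannot bootstrap the result to all $n$. So there is no ``paper's own proof'' to compare against.

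Your proposal is not a proof either, and to your credit you say so explicitly in the last paragraph. What you have written is a survey of plausible strategies---a staircase of interleaved $\ZmodTwo$ summands, recursions via skein triangles or the unified complex, stable ranges for torus knots---none of which is actually executed. The ``core'' step, producing the staircase in $\KhModTwo(T_{2n-1,2n})$ or equivalently locating the right cyclic $\Zunfd$--torsion in $\UKh(T_{2n-1,2n})$, is precisely the unknown content of the conjecture; asserting that a crossing resolution ``lengthens the relevant cyclic torsion summand'' is a restatement of what is to be proved, not an argument for it. Your derivation of the ``in particular'' clause from the first assertion is also incomplete: $\BockMixed^{n}\neq 0$ gives $\BockMixed^{n-1}\BockEv\neq\BockMixed^{n-1}\BockOdd$ and at least one nonzero, but the connected-sum step you sketch to force \emph{both} nonzero needs a concrete class and a computation, not just an appeal to primitivity (and your invocation of \eqref{eq:binomial-for-primitive} is for powers of a single primitive operation, not for mixed alternating words). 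In short: this is a reasonable research plan, consonant with the paper's own discussion, but it is not a proof, and the paper does not claim one exists.
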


\section{Integral lifts}\label{sec:oper-integral}

As explained in Section~\ref{sec:pullback},
$\UKh(L)$ can be regarded as an~extension of both even and odd Khovanov
homology theories. This leads to homological operations between integral
Khovanov homologies of a~link:
\begin{enumerate}
	
	\item\label{def:ev-odd-bock}
	$\BockEvOdd\colon\EKh^i(L)\to\OKh^{i+1}(L)$, the~connecting
	homomorphism for the~sequence \eqref{ses:Kh-odd-pi-even} and
	
	\item\label{def:odd-ev-bock}
	$\BockOddEv\colon\OKh^i(L)\to\EKh^{i+1}(L)$, the~connecting
	homomorphism for the~sequence \eqref{ses:Kh-even-pi-odd}.
	
\end{enumerate}
Both operations are link invariants, because they arise as Bockstein homomorphisms associated to the~short exact sequences of $\Zunfd$--modules
\begin{gather}
\label{ses:Z-odd-pi-even}
	0\to\Zodd\to\Zunfd\to\Zev\to 0\mathrlap{,\text{ and}}\\
\label{ses:Z-even-pi-odd}
	0\to\Zev\to\Zunfd\to\Zev\to 0
\end{gather}
respectively, considered as coefficients for the~unified Khovanov homology $\UKhCom(D)$. As in the~case of Khovanov complexes, these sequences are consequences of the~pullback description of $\Zunfd$.

Tensoring the~exact sequences \eqref{ses:Z-odd-pi-even} and
\eqref{ses:Z-even-pi-odd} with $\ZmodTwo$ reveals that $\BockEvOdd$ and
$\BockOddEv$ are integral lifts of Bockstein homomorphisms from the previous
section.

\begin{proposition}\label{prop:lifts-of-bocks}
	Given a~link $L$, there are commuting squares
	\begin{equation}\label{diag:lifts-of-bocks}
		\begin{diagps}(0em,-0.8ex)(24em,12.25ex)
			\square<8em,10ex>[%
				\EKh^i(L)`\OKh^{i+1}(L)`\KhModTwo^i(L)`\KhModTwo^{i+1}(L);
				\BockEvOdd```\BockOdd
			]
			\square(16em,0ex)<8em,10ex>[%
				\OKh^i(L)`\EKh^{i+1}(L)`\KhModTwo^i(L)`\KhModTwo^{i+1}(L);
				\BockOddEv```\BockEv
			]
		\end{diagps}
	\end{equation}
\end{proposition}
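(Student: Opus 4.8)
The plan is to deduce both commuting squares from a single elementary statement about $\Zunfd$--modules, by tensoring it with the free complex $\UKhCom(D)$ and invoking naturality of the connecting homomorphism. I treat the second square; the first is obtained by interchanging \emph{even} and \emph{odd} throughout. Recall that $\BockOddEv$ is, by construction, the connecting homomorphism of the short exact sequence \eqref{ses:Kh-even-pi-odd}, which is obtained by applying the exact functor $\UKhCom(D)\tensor_{\Zunfd}(-)$ to the short exact sequence of $\Zunfd$--modules $0\to\Zev\to\Zunfd\to\Zodd\to 0$, where --- by the pullback description of Section~\ref{sec:pullback} --- the left-hand term is the ideal $(1+\oddParameter)\subseteq\Zunfd$ with trivial $\oddParameter$--action. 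On the mod-$2$ side, write $\rho$ for reduction modulo $2$ and let $\tilde\beta_e\colon\KhModTwo^i(L)\to\EKh^{i+1}(L)$ be the connecting homomorphism of $0\to\EKhCom(D)\xrightarrow{2}\EKhCom(D)\to\KhComModTwo(D)\to 0$; the formula $\BockEv[x]=\bigl[\tfrac12\diffEv x\bigr]$ says precisely that $\BockEv=\rho\circ\tilde\beta_e$. This last sequence of complexes is in turn $\UKhCom(D)\tensor_{\Zunfd}(-)$ applied to $0\to\Zev\xrightarrow{2}\Zev\to\ZmodTwo\to 0$.

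The key step is to write down the morphism of short exact sequences of $\Zunfd$--modules
\[
	\begin{array}{ccccccccc}
		0 &\longrightarrow& \Zev &\longrightarrow& \Zunfd &\longrightarrow& \Zodd &\longrightarrow& 0 \\
		  &               & \big\Vert & & \downarrow & & \downarrow & & \\
		0 &\longrightarrow& \Zev &\xrightarrow{2}& \Zev &\longrightarrow& \ZmodTwo &\longrightarrow& 0,
	\end{array}
\]
whose middle vertical is the quotient map $\Zunfd\twoheadrightarrow\Zunfd/(\oddParameter-1)=\Zev$ and whose right vertical is reduction modulo $2$ (well defined because $\ZmodTwo$ carries a unique $\Zunfd$--module structure). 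Commutativity need only be checked on the algebra generators $1$ and $\oddParameter$: the right square commutes because $\Zunfd\to\Zodd$ and $\Zunfd\to\Zev$ become equal after reduction modulo $2$, and the left square commutes because the generator $1+\oddParameter$ of the submodule $\Zev\subseteq\Zunfd$ is carried by the middle map to $1+1=2\in\Zev$, which is the image of the generator of the bottom-left copy of $\Zev$ under multiplication by $2$.

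Finally, applying $\UKhCom(D)\tensor_{\Zunfd}(-)$ --- exact, since $\UKhCom(D)$ is a complex of free, hence flat, $\Zunfd$--modules --- turns this ladder into a morphism of short exact sequences of chain complexes whose top row is \eqref{ses:Kh-even-pi-odd}, whose bottom row is $0\to\EKhCom(D)\xrightarrow{2}\EKhCom(D)\to\KhComModTwo(D)\to 0$, whose left vertical is $\id_{\EKhCom(D)}$, whose middle vertical is the natural surjection $\UKhCom(D)\to\EKhCom(D)$, and whose right vertical is reduction modulo $2$ of $\OKhCom(D)$. Naturality of the connecting homomorphism for this morphism gives $\id\circ\BockOddEv=\tilde\beta_e\circ\rho$, i.e.\ $\BockOddEv=\tilde\beta_e\circ\rho$; post-composing both sides with reduction modulo $2$ on $\EKh^{i+1}(L)$ and using $\BockEv=\rho\circ\tilde\beta_e$ yields exactly the second square of \eqref{diag:lifts-of-bocks}, and the symmetric argument with $0\to\Zodd\to\Zunfd\to\Zev\to 0$, $\Zodd=(1-\oddParameter)$, gives the first. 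I expect no genuine difficulty; the one point that must be handled with care is the identification used in the second paragraph --- the copy of $\Z$ inside $\Zunfd$ that is $\Zev$ is generated by $1+\oddParameter$, not by $1$, so passing to $\Zev$ silently divides by $2$, and this is precisely the division by $2$ defining the Bockstein. (If one wishes to avoid the homological-algebra packaging, the same identity is visible on chains: a $\diffOdd$--cocycle $z$ lifts to $(z,z)\in\UKhCom(D)\subseteq\EKhCom(D)\oplus\OKhCom(D)$, and $\diffUnfd(z,z)=(\diffEv z,0)$ with $\diffEv z\in 2\,\EKhCom(D)$, so that $\BockOddEv[z]=\bigl[\tfrac12\diffEv z\bigr]$, whose reduction modulo $2$ is manifestly $\BockEv$ of the reduction of $[z]$.)
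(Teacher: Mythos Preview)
Your proof is correct. The paper takes the concrete chain-level route that you relegate to your final parenthetical remark: it picks a cocycle $x\in\EKhCom(L)$, lifts it to $(x,x)\in\UKhCom(L)$ via the pullback description, computes $\diffUnfd(x,x)=(0,\diffOdd x)$, identifies this with the image of $\tfrac12\diffOdd x$ under the inclusion $y\mapsto(0,2y)$, and concludes $\BockEvOdd[x]=\bigl[\tfrac12\diffOdd x\bigr]$, which visibly reduces to $\BockOdd$ modulo~$2$. Your main argument instead packages everything into a morphism of short exact sequences of $\Zunfd$--modules and appeals to naturality of the connecting homomorphism; this is cleaner in that it isolates the algebraic content (the ladder of coefficient modules) from the topological input (freeness of $\UKhCom(D)$), and it yields the slightly stronger factorisation $\BockOddEv=\tilde\beta_e\circ\rho$ before passing to $\ZmodTwo$. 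The paper's approach, by contrast, is shorter and makes the formula $\BockEvOdd[x]=\bigl[\tfrac12\diffOdd x\bigr]$ explicit, which is used immediately afterwards in the proof of Corollary~\ref{cor:int-Bocks-vs-2}.
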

\begin{proof}
We start with finding a~formula for $\BockEvOdd$ using the~pullback
description of $\UKhCom(L)$. Pick a~cocycle $x\in\EKhCom(L)$; it is covered by
$(x,x)\in\UKhCom(L)$, where we identify the~even and odd chain groups in
the~natural way. Then $\diffUnfd(x,x) = (0,\diffOdd x)$ is the~image of
$\tfrac{1}{2}\diffOdd x$, as the~inclusion of the~odd homology takes a~chain
$y$ into $(0,2y)$. Notice that the~division makes sense because modulo~$2$
we have $\diffOdd x\equiv\diffEv x = 0$. Hence, $\BockEvOdd[x] =
\left[\frac{1}{2}\diffOdd x\right]$, which agrees modulo 2 with
$\BockOdd([x]\tensor\ZmodTwo)$. The~case of $\BockOddEv$ is proved likewise.
\end{proof}

\begin{corollary}\label{cor:int-Bocks-vs-2}
	Both $\BockEvOdd$ and $\BockOddEv$ are annihilated by 2.
\end{corollary}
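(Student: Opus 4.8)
The plan is to derive the claim directly from the integral formula for $\BockEvOdd$ established in the proof of Proposition~\ref{prop:lifts-of-bocks}, together with the standard fact that a connecting homomorphism of a short exact sequence composed with the map induced by the surjection vanishes. Concretely, for a cocycle $x\in\EKhCom(L)$ we have $\BockEvOdd[x]=\bigl[\tfrac{1}{2}\diffOdd x\bigr]\in\OKh^{i+1}(L)$. Then $2\BockEvOdd[x]=[\diffOdd x]$; but $\diffOdd x$ is a coboundary in $\OKhCom(L)$ essentially by construction—it equals $\diffOdd$ applied to the chain $x$ viewed inside the odd complex via the natural identification of the even and odd chain groups—so $[\diffOdd x]=0$. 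Hence $2\BockEvOdd=0$, and the case of $\BockOddEv$ is identical after swapping the roles of the two sequences \eqref{ses:Z-odd-pi-even} and \eqref{ses:Z-even-pi-odd}.

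Alternatively, and perhaps more cleanly, I would phrase this as the mod-$2$ statement already packaged in Proposition~\ref{prop:lifts-of-bocks}: the square there shows that $\BockEvOdd$ reduces modulo $2$ to $\BockOdd$, so in particular the image of $\BockEvOdd$ lies in the kernel of reduction composed with... no—more to the point, one uses that $\BockEvOdd$ is the connecting map for $0\to\Zodd\xrightarrow{\times 2}\Zunfd\to\Zev\to 0$ after identifying $\Zodd$ with the subgroup of $\Zunfd$ of elements divisible by $2$ (as in Section~\ref{sec:pullback}), where the inclusion $\Zodd\hookrightarrow\Zunfd$ is multiplication by $2$ followed by an isomorphism onto that subgroup. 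The connecting homomorphism for such a sequence, precomposed with the inclusion $\Zodd\to\Zunfd$ which is itself multiplication by $2$, is zero; equivalently, multiplying the output of $\BockEvOdd$ by $2$ lands it in the image of $\OKhCom(D)\to\UKhCom(D)$ composed appropriately, forcing it to be a coboundary. Either way the one-line computation $2\cdot\tfrac12\diffOdd x=\diffOdd x$ is the entire content.

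There is essentially no obstacle here: the statement is a formal consequence of the explicit cochain-level formula $\BockEvOdd[x]=[\tfrac12\diffOdd x]$, which was already derived. The only point requiring the tiniest bit of care is the bookkeeping of the identification under which $\diffOdd x$—a priori an element of the odd chain group—is genuinely a coboundary in $\OKhCom(L)$; but this is immediate because $x$ lives in a chain group that is canonically identified for the even and odd complexes, so $\diffOdd x=\diffOdd(x)$ literally exhibits it as a boundary. Thus the corollary follows without further work.
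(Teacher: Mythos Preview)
Your argument is correct and matches the paper's proof essentially verbatim: both use the explicit formula $\BockEvOdd[x]=\bigl[\tfrac12\diffOdd x\bigr]$ from Proposition~\ref{prop:lifts-of-bocks} and observe that doubling gives the coboundary $[\diffOdd x]=0$. The only cosmetic difference is that the paper writes $\BockEvOdd[2x]$ where you write $2\BockEvOdd[x]$, which is of course the same thing.
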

\begin{proof}
Since $\BockEvOdd[x] = \left[\frac{1}{2}\diffOdd x\right]\in\OKh(L)$, we
immediately get that $\BockEvOdd[2x]=[\diffOdd x]=0$. The case of $\BockOddEv$
is similar.
\end{proof}

Combining Proposition~\ref{prop:lifts-of-bocks} and Conjecture~\ref{conj:Bocksteins-algebra} results in an~algebraic independence of integral operations.

\begin{conjecture}\label{conj:int-Bockstein-algebra}
	All alternating compositions $\cdots\BockEvOdd\BockOddEv$ and $\cdots\BockOddEv\BockEvOdd$ are nontrivial.
\end{conjecture}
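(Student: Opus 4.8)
The plan is to deduce Conjecture~\ref{conj:int-Bockstein-algebra} from Conjecture~\ref{conj:Bocksteins-algebra}, with Proposition~\ref{prop:lifts-of-bocks} as the bridge. Write $\rho$ for the modulo-$2$ reduction map on integral Khovanov homology, on both the even and the odd side, and let $\Phi_n$ (respectively $\Phi'_n$) denote the alternating composition of $n$ integral operations beginning with $\BockEvOdd$ (respectively with $\BockOddEv$), so that $\Phi_n$ starts on $\EKh(L)$ and $\Phi'_n$ on $\OKh(L)$. Two facts are needed: a ``tower'' version of Proposition~\ref{prop:lifts-of-bocks}, and the observation that each Bockstein homomorphism over $\ZmodTwo$ automatically takes values in the image of $\rho$.

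For the first fact, glue the two commuting squares of Proposition~\ref{prop:lifts-of-bocks} along the common vertical edge $\OKh^{\bullet}(L)\to\KhModTwo^{\bullet}(L)$ and iterate $n$ times. This gives $\rho\circ\Phi_n=\bigl(\BockMixed^{n-1}\BockOdd\bigr)\circ\rho$ and $\rho\circ\Phi'_n=\bigl(\BockMixed^{n-1}\BockEv\bigr)\circ\rho$, using the identification of the alternating compositions with powers of $\BockMixed$ from the discussion after Proposition~\ref{prop:three-operations}. In particular $\Phi_n\neq0$ as soon as $\BockMixed^{n-1}\BockOdd$ is nonzero on some class in $\im\rho$, and symmetrically $\Phi'_n\neq0$ once $\BockMixed^{n-1}\BockEv$ is nonzero on $\im\rho$.

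For the second fact, the formula $\BockEv[x]=\bigl[\tfrac12\diffEv x\bigr]$ of Section~\ref{sec:oper-mod-2} exhibits every value of $\BockEv$ as the modulo-$2$ reduction of the \emph{integral} even cocycle $\tfrac12\diffEv x$, so $\im\BockEv\subseteq\im\rho$ on the even side, and likewise $\im\BockOdd\subseteq\im\rho$ on the odd side. Now assume Conjecture~\ref{conj:Bocksteins-algebra}. For each $n\geq2$ there is a knot $K$ with $\BockMixed^{n-1}\BockOdd\neq0$; pick a class $a\in\KhModTwo^i(K)$ with $\BockMixed^{n-1}\BockOdd(a)\neq0$ and rewrite $\BockMixed^{n-1}\BockOdd=\bigl(\BockMixed^{n-2}\BockEv\bigr)\circ\BockOdd$ (valid since $\BockEv\BockOdd=\BockMixed\BockOdd$). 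Then $w:=\BockOdd(a)$ lies in $\im\rho$, say $w=\rho(\widetilde w)$ for an integral odd class $\widetilde w$, and $\BockMixed^{n-2}\BockEv(w)\neq0$; by the first fact, $\rho\bigl(\Phi'_{n-1}(\widetilde w)\bigr)=\BockMixed^{n-2}\BockEv\bigl(\rho(\widetilde w)\bigr)\neq0$, hence $\Phi'_{n-1}\neq0$. Running the same argument with $\BockEv$ and $\BockOdd$ interchanged gives $\Phi_{n-1}\neq0$. Letting $n$ range over $\{2,3,\dots\}$ yields, for every length $m\geq1$, a knot on which both alternating integral compositions of length $m$ are nontrivial, which is the assertion to be proved.

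The genuine obstacle is that this reduction is conditional: it rests on Conjecture~\ref{conj:Bocksteins-algebra}, which is established only in low degrees and whose uniform-in-$n$ form is the real difficulty. For a fixed small $m$ the integral statement can be made unconditional from the computations in Section~\ref{sec:oper-mod-2}; for instance $\BockEv\BockOdd$ is nonzero on the class $u'\in\KhModTwo^{5,19}(10_{124})$, which lifts integrally because $\EKh^{5,19}(10_{124})$ is free and $\EKh^{6,19}(10_{124})=0$, so the degree-$2$ operation $\BockOddEv\BockEvOdd$ is nonzero on $10_{124}$. Pushing past the few computable cases is obstructed both by the lack of software that computes $\UKh$ directly and, on the mod-$2$ side, by the difficulty of exhibiting nonvanishing high powers of $\BockMixed$ — the content of Conjecture~\ref{conj:Bocksteins-algebra}, where the torus knots $T_{2n-1,2n}$ are the conjectured witnesses.
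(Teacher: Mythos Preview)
This statement is a conjecture, left open in the paper, so there is no proof to compare against; you correctly recognize this and offer a conditional reduction to Conjecture~\ref{conj:Bocksteins-algebra} via Proposition~\ref{prop:lifts-of-bocks}, which is exactly the paper's own discussion (the paragraph immediately following the conjecture). Your ``second fact''---that every value of a mod-$2$ Bockstein is automatically the reduction of an integral class, since $\tfrac12\diffOdd x$ (resp.\ $\tfrac12\diffEv x$) is an integral odd (resp.\ even) cocycle---is a genuine sharpening: the paper only says it suffices to find an integral class whose $\ZmodTwo$--reduction survives the alternating mod-$2$ composition, without explaining why such a class must exist granted Conjecture~\ref{conj:Bocksteins-algebra}; your observation closes that gap and makes the implication \emph{Conjecture~\ref{conj:Bocksteins-algebra} $\Rightarrow$ Conjecture~\ref{conj:int-Bockstein-algebra}} fully rigorous. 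The low-degree unconditional checks you give (e.g.\ $\BockOddEv\BockEvOdd\neq 0$ on $10_{124}$) also match the paper's list of verified cases.
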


Indeed, it is enough to find a~link $L$ and a~class $a\in\OKh(L)$ (resp.\ $a\in\EKh(L)$) such that the~composition $\cdots\BockOdd\BockEv$ (resp.\ $\cdots\BockEv\BockOdd$) does not vanish on the~$\ZmodTwo$--reduction $\bar a\in\KhModTwo(L)$. In particular, we know that
\begin{itemize}
	\item both $\BockEvOdd\BockOddEv$ and $\BockOddEv\BockEvOdd$ are nonzero for $10_{124} = T_{3,5}$, and
	\item $\BockEvOdd\BockOddEv\BockEvOdd$ is nonzero for $T_{5,6}$.
\end{itemize}
We expect higher torus knots to provide examples for which longer compositions are nontrivial.

The~ring $\Zunfd$ is the~only nontrivial extension between $\Zev$ and $\Zodd$. Indeed, free resolutions of $\Zev$ and $\Zodd$ are given by infinite sequences
\begin{align}
	0\to/<-/ \Zev &
		 \to/<-/ \Zunfd
		 \to/<-/^{1-\oddParameter} \Zunfd
		 \to/<-/^{1+\oddParameter} \Zunfd
		 \to/<-/ \ldots \\
	0\to/<-/ \Zodd &
		 \to/<-/ \Zunfd
		 \to/<-/^{1+\oddParameter} \Zunfd
		 \to/<-/^{1-\oddParameter} \Zunfd
		 \to/<-/ \ldots
\end{align}
from which one computes
\begin{align}
		\Ext^1_{\Zunfd}(\Zev,\Zev)  &\cong 0, &	\Ext^1_{\Zunfd}(\Zev,\Zodd)&\cong\ZmodTwo,\\
		\Ext^1_{\Zunfd}(\Zodd,\Zodd)&\cong 0,	&	\Ext^1_{\Zunfd}(\Zodd,\Zev)&\cong\ZmodTwo.
\end{align}

\begin{corollary}
	The~operations $\BockEvOdd$ and $\BockOddEv$ are the~only nontrivial Bockstein-type operations between the~even and odd Khovanov homology.
\end{corollary}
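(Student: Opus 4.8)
The plan is to read the statement straight off the four $\Ext^1_{\Zunfd}$ groups computed just above. First I would make precise what a \emph{Bockstein-type operation between the even and odd Khovanov homology} is: it is a connecting homomorphism attached, via the long exact sequence in $\UKh(L;-)$, to a short exact sequence of $\Zunfd$--modules $0\to M'\to M\to M''\to0$ in which each of $M'$ and $M''$ is one of $\Zev$ or $\Zodd$; by the identifications \eqref{eq:EKh-and-OKh-from-UKh} the source and target of such an operation are then copies of the even or odd Khovanov homology. Since $\UKhCom(D)$ is a complex of free $\Zunfd$--modules, tensoring the coefficient sequence with $\UKhCom(D)$ remains short exact, so every such operation is well defined.

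Next I would observe that the operation depends only on the class of the extension $0\to M'\to M\to M''\to0$ in $\Ext^1_{\Zunfd}(M'',M')$: two extensions representing the same class fit into a commuting ladder restricting to the identity on $M'$ and on $M''$, and naturality of the connecting homomorphism then identifies the induced operations; in particular the split extension yields the zero operation. Hence the Bockstein-type operations $\UKh(L;M'')\to\UKh(L;M')$ form the image of $\Ext^1_{\Zunfd}(M'',M')$ under a group homomorphism, and it suffices to feed in the four cases using the free resolutions of $\Zev$ and $\Zodd$ displayed above. From $\Ext^1_{\Zunfd}(\Zev,\Zev)\cong\Ext^1_{\Zunfd}(\Zodd,\Zodd)\cong0$ there are no nonzero self-operations of $\EKh$ or of $\OKh$. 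From $\Ext^1_{\Zunfd}(\Zev,\Zodd)\cong\Ext^1_{\Zunfd}(\Zodd,\Zev)\cong\ZmodTwo$, each group has a unique nonzero class, and the sequence \eqref{ses:Z-odd-pi-even}, respectively \eqref{ses:Z-even-pi-odd}, represents that class; its connecting homomorphism is, by definition, $\BockEvOdd$, respectively $\BockOddEv$. Together with $0$ this exhausts the list, and nontriviality of $\BockEvOdd$ and $\BockOddEv$ is already witnessed by Example~\ref{ex:odd-vs-even-Bock} through Proposition~\ref{prop:lifts-of-bocks}.

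There is no real obstacle here—the content is entirely in the $\Ext^1$ computations—but the one point deserving a line is that the nonzero class of $\Ext^1\cong\ZmodTwo$ is genuinely realised by the \emph{module} extension with middle term $\Zunfd$, and not only by an abstract Yoneda cocycle. This follows because $\Zunfd$ is indecomposable as a $\Zunfd$--module: its only idempotents are $0$ and $1$ (writing $e=a+b\oddParameter$, the relation $e^2=e$ forces $b=0$ and $a\in\{0,1\}$), whereas $\Zev\oplus\Zodd$ has four idempotents. The same observation shows the extension is non-split, which re-confirms that $\BockEvOdd$ and $\BockOddEv$ correspond to the nonzero $\Ext^1$ classes.
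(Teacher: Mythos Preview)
Your proposal is correct and follows exactly the route the paper intends: the corollary is stated without a separate proof, as an immediate consequence of the four $\Ext^1_{\Zunfd}$ computations just above, and you have simply spelled out why those computations suffice. Your added paragraph verifying that $\Zunfd$ is indecomposable (hence that the sequences \eqref{ses:Z-odd-pi-even} and \eqref{ses:Z-even-pi-odd} are non-split and represent the nonzero $\Ext^1$ classes) is a nice touch that the paper leaves implicit in the sentence ``$\Zunfd$ is the only nontrivial extension between $\Zev$ and $\Zodd$.''
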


Similarly to the~$\ZmodTwo$ case, the~operations $\BockEvOdd$ and $\BockOddEv$ are primitive when regarded as operations acting on the~direct sum of the~even and odd Khovanov homology. Indeed, if we write $\BKh(L):=\EKh(L)\oplus\OKh(L)$, then we can identify $\BockEvOdd$ and $\BockOddEv$ with matrices
\begin{equation}
	\BockEvOddDS := \begin{pmatrix} 0 & 0 \\ \BockEvOdd & 0 \end{pmatrix}
	\hskip 2cm
	\BockOddEvDS := \begin{pmatrix} 0 & \BockOddEv \\ 0 & 0 \end{pmatrix}
\end{equation}
respectively. Following the~K\"unneth formula, we define for links $L$ and
$L'$
\begin{equation}
	\EKh(L) \dertensor \EKh(L') :=
			\EKh(L) \tensor \EKh(L')  \oplus  \Tor(\EKh(L),\EKh(L'))[1]
			\cong \EKh(L\sqcup L')
\end{equation}
and likewise for $\OKh$ and $\BKh$. Then $\BKh(L\sqcup L') \cong \EKh(L) \dertensor \EKh(L') \oplus \OKh(L) \dertensor \OKh(L')$ is a~direct summand of $\BKh(L) \dertensor \BKh(L')$, and the~operations $\BockEvOdd$ and $\BockOddEv$ regarded as endomorphisms of the~product $\BKh(L) \dertensor \BKh(L')$ can be identified with matrices
\begin{gather}
	\BockEvOddDS ={
		\begin{pmatrix}
			0 & 0 & 0 & 0 \\
			\id\dertensor\BockEvOdd & 0 & 0 & 0 \\
			\BockEvOdd\dertensor\id & 0 & 0 & 0 \\
			0 & \BockEvOdd\dertensor\id & \id\dertensor\BockEvOdd & 0
		\end{pmatrix}}
	= \BockEvOddDS\dertensor\id_{\BKh} + \id_{\BKh}\dertensor\BockEvOddDS
	,\textrm{ and}
	\\[1ex]
	\BockOddEvDS ={
		\begin{pmatrix}
			0 & \id\dertensor\BockOddEv & \BockOddEv\dertensor\id & 0 \\
			0 & 0 & 0 & \BockOddEv\dertensor\id \\
			0 & 0 & 0 & \id\dertensor\BockOddEv \\
			0 & 0 & 0 & 0
		\end{pmatrix}}
	= \BockOddEvDS\dertensor\id_{\BKh} + \id_{\BKh}\dertensor\BockOddEvDS
	,\phantom{\textrm{ and}}
\end{gather}
where we order the~four summands of $\BKh(L)\dertensor\BKh(L')$ as
\emph{even}--\emph{even}, \emph{even}--\emph{odd}, \emph{odd}--\emph{even},
and \emph{odd}--\emph{odd}. In particular, $\Phi := \BockEvOddDS +
\BockOddEvDS = \begin{psmallmatrix} 0 & \BockOddEv \\ \BockEvOdd & 0
\end{psmallmatrix}$ is a~$\sqcup$--primitive operation on $\BKh(L)$. The~same
argument works for connected sums of links, showing the~operations are
$\#$--primitive as well.

\begin{remark}
Using the~primitivity as defined above, one may try to prove Conjecture~\ref{conj:int-Bockstein-algebra} without referring to the~$\ZmodTwo$--operations. Unfortunately, $\Phi^{2^r}$ is again primitive for every $r>0$, due to the~binomial formula \eqref{eq:binomial-for-primitive} and Corollary~\ref{cor:int-Bocks-vs-2}.
\end{remark}

\section{Reduced Khovanov homology}\label{sec:reduced}

Given a~diagram $D$ of a~knot $K$, choose a~point $b$ on $D$ and consider
the~cobordism merging a~circle to $D$ at the~point $b$, as shown in
Figure~\ref{fig:circle-merge}. This operation induces
on the~chain complex $\UKhCom(D)$ a~module structure over
$A_\oddParameter=\Zunfd v_+\oplus\Zunfd v_-$, the~algebra associated to
a~circle. This structure is independent of the~chosen point $b$, see \cite{KhHomTensor}.

Consider the~subcomplex $\RedUKhCom(D)$ of $\UKhCom(D)$ spanned by the~chains
annihilated by $v_-$. We call its homology the~\emph{reduced unified homology}
$\RedUKh(K)$ of $K$.  A~quick comparison with definitions of the reduced even
and odd Khovanov homology \cite{KhPatterns,OddKhHom} reveals that both are
specializations of $\RedUKh(K)$, cf.~\eqref{eq:EKh-and-OKh-from-UKh}:
\begin{equation}
	\RedEKh(K) \cong \RedUKh(K;\Zev),
	\hskip 2cm
	\RedOKh(K) \cong \RedUKh(K;\Zodd).
\end{equation}
In particular, there are reduced versions of all the~homological operations
defined so far:
\begin{itemize}
	\item the~reduced Bockstein operations $\RedBockEv,\RedBockOdd\colon \RedKhModTwo^i(K)\to \RedKhModTwo^{i+1}(K)$, and
	\item their integral lifts $\RedBockEvOdd\colon \RedEKh^i(K)\to \RedOKh^{i+1}(K)$ and $\RedBockOddEv\colon \RedOKh^i(K)\to \RedEKh^{i+1}(K)$.
\end{itemize}
Clearly, the~reduced homology is multiplicative with respect to the~connected
sum of knots, which implies that reductions of $\#$--primitive operations are
primitive. In particular, all the~reduced Bockstein homomorphisms are
primitive. As before, we conjecture that the~reduced Bockstein homomorphisms satisfy only the~obvious relations.

\begin{conjecture}
	The~alternating compositions of reduced Bockstein homomorphisms are nonzero and different, and so are their integral lifts (given as alternating compositions of $\RedBockEvOdd$ and $\RedBockOddEv$).
\end{conjecture}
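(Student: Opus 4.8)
The plan is to follow the template of Section~\ref{sec:oper-mod-2} and Conjecture~\ref{conj:Bocksteins-algebra} in two stages: first reduce the integral half of the statement to the $\ZmodTwo$ half, then establish (or at least accumulate evidence for) the $\ZmodTwo$ half, ideally by bootstrapping from the already-conjectured unreduced statement rather than starting over. For Stage~1 I would run the reduced analogue of Proposition~\ref{prop:lifts-of-bocks}: the subcomplex $\RedUKhCom(D)$ is again a complex of free $\Zunfd$–modules, so the two short exact sequences \eqref{ses:Z-odd-pi-even} and \eqref{ses:Z-even-pi-odd} of coefficient modules make $\RedBockEvOdd$ and $\RedBockOddEv$ genuine Bockstein homomorphisms, and tensoring with $\ZmodTwo$ shows that $\RedBockEvOdd$ (resp.\ $\RedBockOddEv$) reduces modulo~$2$ to $\RedBockOdd$ (resp.\ $\RedBockEv$). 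As in the discussion after Conjecture~\ref{conj:int-Bockstein-algebra}, it then suffices to produce, for each $n$, a knot $K$ together with a class $a\in\RedEKh(K)$ or $a\in\RedOKh(K)$ whose modulo-$2$ reduction is not killed by the length-$n$ alternating composition of the reduced $\ZmodTwo$–Bocksteins.

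For Stage~2 I would try to reduce to the unreduced Conjecture~\ref{conj:Bocksteins-algebra}. Over $\ZmodTwo$ there is an isomorphism $\KhModTwo(L)\cong\RedKhModTwo(L)\otimes_{\ZmodTwo}(A\otimes\ZmodTwo)$, traceable to the acyclicity of the $v_-$–contraction on the circle algebra; the key point to check is whether it can be realized by a chain homotopy equivalence $\KhComModTwo(D)\simeq\RedKhComModTwo(D)\otimes_{\ZmodTwo}(A\otimes\ZmodTwo)$ commuting with $\diffEv$ and $\diffOdd$ \emph{simultaneously} — equivalently, with both ``halves'' $\tfrac12(\diffEv+\diffOdd)$ and $\tfrac{\oddParameter}2(\diffEv-\diffOdd)$ of $\diffUnfd$ — so that it intertwines $\RedBockEv,\RedBockOdd$ with $\BockEv\otimes\id,\BockOdd\otimes\id$. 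Given such compatibility, the nonvanishing of $\BockMixed^n$, $\BockMixed^{n-1}\BockEv$, $\BockMixed^{n-1}\BockOdd$ on $\KhModTwo(K_n)$ forces the corresponding nonvanishing on $\RedKhModTwo(K_n)$, and pairwise distinctness follows from $\RedBockEv^2=\RedBockOdd^2=0$ exactly as in Proposition~\ref{prop:three-operations}. If the compatibility is only partial, the fallback is the direct computation used for $8_{19}$, $10_{124}$ and $T_{5,6}$: compute $\RedEKh$ and $\RedOKh$ of torus knots $T_{2,n}$, $T_{3,n}$, $T_{5,6},\dots$, locate their $\Z_2$ invariant factors, read off $\RedBockEv$ and $\RedBockOdd$ as in Figures~\ref{tab:8-19}--\ref{tab:10-124}, and then, as in Example~\ref{ex:4-bocks}, splice short nontrivial compositions into longer ones using the $\#$–primitivity of the reduced Bocksteins.

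The main obstacle I expect is the same structural one that already limits the unreduced case: since $\RedBockMixed$ is $\#$–primitive, each power $(\RedBockMixed)^{2^r}$ is again primitive by \eqref{eq:binomial-for-primitive}, hence vanishes on $K_1\#K_2$ whenever it vanishes on both summands. Thus connected sums cannot manufacture arbitrarily long nontrivial compositions from a finite list, and each new power of $\RedBockMixed$ genuinely demands a larger knot — in practice one beyond the reach of current software. Consequently a complete proof seems to require the Stage~2 reduction to Conjecture~\ref{conj:Bocksteins-algebra}, and the delicate step is producing a single chain-level splitting of $\KhComModTwo(D)$ off the reduced complex that is compatible with the even and the odd differential at the same time.
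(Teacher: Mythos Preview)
The statement you are trying to prove is stated in the paper as a \emph{conjecture}; the paper offers no proof, only supporting evidence (the example of $T_{4,5}$ in Figure~\ref{tab:T(4,5)} and the computational results of Section~\ref{sec:computation}). There is therefore no ``paper's own proof'' to compare your proposal against, and you should not expect your outline to close into a complete argument.

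That said, your Stage~1 is entirely sound and parallels exactly what the paper does in the unreduced setting (Proposition~\ref{prop:lifts-of-bocks} and the discussion after Conjecture~\ref{conj:int-Bockstein-algebra}): the integral half of the conjecture does reduce to the $\ZmodTwo$ half. Your Stage~2, however, does not yield a proof even in principle. The target of your reduction, Conjecture~\ref{conj:Bocksteins-algebra}, is itself open in the paper, so at best you would obtain a conditional implication ``unreduced conjecture $\Rightarrow$ reduced conjecture.'' Moreover, the key technical step you isolate --- a single chain-level splitting $\KhComModTwo(D)\simeq\RedKhComModTwo(D)\otimes_{\ZmodTwo}(A\otimes\ZmodTwo)$ that intertwines \emph{both} integral differentials $\diffEv$ and $\diffOdd$ --- amounts to asking for a splitting at the level of the unified complex $\UKhCom(D)$ over $\Zunfd$, and there is no reason to expect this to be any easier than the conjecture itself; indeed, the whole thrust of Section~\ref{sec:computation} is that the even and odd structures on $\KhModTwo$ are genuinely incompatible in subtle ways. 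Your fallback (direct computation plus $\#$--primitivity) runs into precisely the obstruction you identify via \eqref{eq:binomial-for-primitive}, which is the same wall the paper hits in the unreduced case. In short: your outline is a reasonable research plan, but it is not a proof, and the paper does not claim one either.
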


\begin{example}
	The~torus knot $T_{4,5}$ admits a~class $u\in\RedKhModTwo(T_{4,5})$, on which $\RedBockEv$ vanishes but
$\RedBockEv\RedBockOdd$ does not, see Figure~\ref{tab:T(4,5)}.
\end{example}

\begin{figure}[tp]
	\centering
	\begingroup
\small
\renewcommand\arraystretch{1}%
\def\Z{$\mathbb Z$}%
\def\F{$\mathbb Z_2$}%
\def\G{$\mathbb Z_3$}%
\begin{tabular}{||r||c|c|c|c|c|c|c|c|c|c|c||}
	\hhline{|============|}
	$\RedEKh$& 0& 1& 2& 3& 4& 5& 6& 7& 8& 9&10\\ \hhline{#=#=|=|=|=|=|=|=|=|=|=|=#}
	28 &  &  &  &  &  &  &  &  &  &  &\F\\ \hline
	26 &  &  &  &  &  &  &  &  &  &\Z&  \\ \hline
	24 &  &  &  &  &  &  &  &\Z&\Z&  &  \\ \hline
	22 &  &  &  &  &  &\Z&  &\F&  &  &  \\ \hline
  20 &  &  &  &  &  &  &\Z&  &  &  &  \\ \hline
	18 &  &  &  &\Z&\Z&  &  &  &  &  &  \\ \hline
	16 &  &  &\Z&  &  &  &  &  &  &  &  \\ \hline
	14 &  &  &  &  &  &  &  &  &  &  &  \\ \hline
	12 &\Z&  &  &  &  &  &  &  &  &  &  \\
	\hhline{|============|}
\end{tabular}\endgroup

	\par\bigskip
	\begingroup
\small
\renewcommand\arraystretch{1}%
\def\Z{$\mathbb Z$}%
\def\F{$\mathbb Z_2$}%
\def\G{$\mathbb Z_3$}%
\begin{tabular}{||r||c|c|c|c|c|c|c|c|c|c|c||}
	\hhline{|============|}
	$\RedOKh$& 0& 1& 2& 3& 4& 5& 6& 7& 8& 9&10\\ \hhline{#=#=|=|=|=|=|=|=|=|=|=|=#}
	28 &  &  &  &  &  &  &  &  &  &\Z&\Z\\ \hline
	26 &  &  &  &  &  &  &  &  &  &\Z&  \\ \hline
	24 &  &  &  &  &  &  &  &\Z&\Z&  &  \\ \hline
	22 &  &  &  &  &  &  &\F&\Z&  &  &  \\ \hline
  20 &  &  &  &  &  &\G&\Z&  &  &  &  \\ \hline
	18 &  &  &  &  &\F&  &  &  &  &  &  \\ \hline
	16 &  &  &\Z&  &  &  &  &  &  &  &  \\ \hline
	14 &  &  &  &  &  &  &  &  &  &  &  \\ \hline
	12 &\Z&  &  &  &  &  &  &  &  &  &  \\
	\hhline{|============|}
\end{tabular}\endgroup

	\par\bigskip
	\begingroup
\small
\renewcommand\arraystretch{1.2}%
\def\F{$\ \ZmodTwo\ $}%
\def\P{\phantom{\F}}%
\def\R#1{\Rnode{#1}{\F}}%
\begin{tabular}{||r||c|c|c|c|c|c|c|c|c|c|c||}
	\hhline{|============|}
	$\RedKhModTwo$& 0& 1& 2& 3      & 4      & 5      & 6      & 7      & 8& 9      &10       \\ \hhline{#=#=|=|=|=|=|=|=|=|=|=|=#}
	28 &  &\P&  &        &        &        &        &        &  &\R{9-28}&\R{10-28}\\ \hline
	26 &  &  &  &        &        &        &        &        &  &\F      &         \\ \hline
	24 &  &  &  &        &        &        &        &\F      &\F&        &         \\ \hline
	22 &  &  &  &        &        &\R{5-22}&\R{6-22}&\R{7-22}&  &        &         \\ \hline
  20 &  &  &  &        &        &        &\F      &        &  &        &         \\ \hline
	18 &  &  &  &\R{3-18}&\R{4-18}&        &        &        &  &        &         \\ \hline
	16 &  &  &\F&        &        &        &        &        &  &        &         \\ \hline
	14 &  &  &  &        &        &        &        &        &  &        &         \\ \hline
	12 &\F&  &  &        &        &        &        &        &  &        &         \\
	\hhline{|============|}
\end{tabular}%
\bockoddarrow{3-18}{4-18}%
\bockevarrow{9-28}{10-28}%
\bockoddarrow{5-22}{6-22}%
\bockevarrow{6-22}{7-22}%
\endgroup

	\vskip 0.5\baselineskip
	\caption{Even and odd reduced Khovanov homology as well as Bockstein
	homomorphisms on the reduced Khovanov homology over $\Z_2$ for
	the~knot $T_{4,5}$.}
	\label{tab:T(4,5)}
\end{figure}

\section{Experimental results}\label{sec:computation}

We computed ranks of all the homological operations discussed in this paper in every bigrading for all prime knots with up to 16 crossings using~{\tt KhoHo}~\cite{KhoHo}. We then compared these ranks for pairs of knots that have the same even and odd Khovanov homology. As discussed in the~Introduction, the~ranks of Bockstein homomorphisms $\BockEv$ and $\BockOdd$ are fully determined by the~even and odd Khovanov homology. Therefore, they do not provide any new information. However, the~mixed Bockstein $\BockMixed=\BockEv+\BockOdd$ is different. It turns out that there exist~$7$ pairs of prime knots ($14$ if counted with mirror images) with up to $14$ crossings that have the same even and odd Khovanov homology but different ranks of $\BockMixed$. There are $61$ (resp.~$122$) such pairs among knots with up to $15$ crossings and $742$ (resp.~$1484$) with up to $16$ crossings, see Table~\ref{tab:distinguished-pairs}.

\begin{corollary}
	The~unified homology $\UKh(L)$ is a~stronger link invariant than $\EKh(L)\oplus\OKh(L)$.
\end{corollary}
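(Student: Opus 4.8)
The plan is to deduce the corollary from the computations summarized just above, once two structural remarks about the unified theory are in place; throughout I would understand $\UKh(L)$ as the unified invariant carried by the complex $\UKhCom(D)$ up to chain homotopy equivalence, the form in which it is constructed in \cite{ChCob}. First I would recall that this invariant determines $\EKh(L)$ and $\OKh(L)$, which are obtained from $\UKhCom(D)$ by specializing the coefficients along $\Zunfd\to\Zev$ and $\Zunfd\to\Zodd$, cf.~\eqref{eq:EKh-and-OKh-from-UKh}. In particular $\UKh$ is at least as strong as $\EKh\oplus\OKh$, and the corollary reduces to producing knots $K,K'$ with $\EKh(K)\oplus\OKh(K)\cong\EKh(K')\oplus\OKh(K')$ whose unified complexes are not chain homotopy equivalent.

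Second I would point to the invariant that witnesses this inequivalence, namely the graded rank of the mixed Bockstein $\BockMixed$. By the construction in Section~\ref{sec:oper-mod-2}, $\BockMixed$ is the connecting homomorphism of the long exact sequence obtained by tensoring the free complex $\UKhCom(D)$ with the short exact sequence of $\Zunfd$--modules $0\to\ZmodTwo\to\ZmodTwo[\oddParameter]/(\oddParameter^2-1)\to\ZmodTwo\to0$; hence $\rk\BockMixed$ in each bidegree depends only on the chain homotopy type of $\UKhCom(D)$. By contrast, while the ranks of $\BockEv$ and $\BockOdd$ are individually forced by $\EKh$ and $\OKh$ (see \cite[Proposition~3E.3]{Hatcher}), the rank of their sum $\BockMixed$ is not determined by $\EKh\oplus\OKh$, and it is precisely this discrepancy that the computer search is set up to locate.

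Finally I would invoke Table~\ref{tab:distinguished-pairs}: it lists prime knots $K,K'$---occurring already at $14$ crossings, and in $742$ pairs up to $16$ crossings---that have isomorphic even and odd Khovanov homology but $\rk\BockMixed$ differing in some bidegree. For any such pair the complexes $\UKhCom(K)$ and $\UKhCom(K')$ cannot be chain homotopy equivalent, even though $\EKh(K)\oplus\OKh(K)\cong\EKh(K')\oplus\OKh(K')$, which is exactly the statement of the corollary. The step requiring the most care is the second one: one must be sure that a mismatch of $\rk\BockMixed$ genuinely certifies the non-equivalence of the unified complexes and is not an artifact of the chain-level representatives chosen, which comes down to the naturality of Bockstein connecting homomorphisms under chain homotopy equivalences of free complexes, together with the correctness of the underlying {\tt KhoHo}~\cite{KhoHo} computation.
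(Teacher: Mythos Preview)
Your approach is essentially the paper's own: the corollary is stated immediately after Table~\ref{tab:distinguished-pairs} with no further argument, and is meant as a direct consequence of the existence of knot pairs with identical $\EKh$ and $\OKh$ but different ranks of $\BockMixed$. Your write-up simply makes this implicit inference explicit, and the care you take in the second paragraph---checking that $\rk\BockMixed$ really is an invariant of the unified complex via naturality of connecting homomorphisms---is more than the paper itself supplies.

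One point is worth flagging. In the paper's notation (Table~\ref{tab:notations}) the symbol $\UKh(L)$ denotes the \emph{homology} of $\UKhCom(D)$ as a graded $\Zunfd$--module, not the chain-homotopy type of the complex. You sidestep this by declaring at the outset that you read $\UKh(L)$ as the homotopy type; with that reading your argument is airtight, since Bockstein ranks are manifestly invariants of a free complex up to homotopy. If one insists on the paper's literal meaning, the missing step is that an isomorphism of graded $\Zunfd$--modules $\UKh(K)\cong\UKh(K')$ forces equal ranks of $\BockMixed$. Because $\Zunfd$ is not hereditary (the paper itself exhibits an infinite free resolution of $\ZmodTwo$), homology does not automatically determine a bounded free complex up to quasi-isomorphism, so this would require a separate justification which neither you nor the paper provides. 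In short: your proof is correct for the statement you actually prove, it matches the paper's intended argument, and your explicit choice of interpretation is, if anything, more scrupulous than the original.
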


\begin{table}[t]
\begin{tabular}{l|c|c|c}
& $\le14$ crossings & $\le15$ crossings & $\le16$ crossings \\
\hline
Number of knots (counting mirrors)       &67289&403348&2420670\\
Pairs distinguished by ranks of $\BockMixed$   &  14 &  122 &  1484 \\
Pairs distinguished by ranks of $\BockMixed^2$ &   0 &    8 &   108 \\
Pairs distinguished by ranks of $\BockOddEv$   &   7 &   61 &   760 \\
Pairs distinguished by ranks of $\BockEvOdd$   &   7 &   61 &   760 \\
Pairs distinguished by ranks of $\BockSqEv$    &   0 &    4 &    71 \\
Pairs distinguished by ranks of $\BockSqOdd$   &   9 &   95 &  1232 \\
\hline
\end{tabular}
\bigskip
\caption{Number of pairs of prime non-alternating knots with the same even and odd Khovanov homology that are distinguished by homological operations}
\label{tab:distinguished-pairs}
\end{table}

Here are the first $7$ pairs of knots\footnote{
	Here, $13^n_{141}$ denotes the non-alternating knot number 141 with 13 crossings from the Knotscape knot table and $\overline{14}^n_{21021}$ is the mirror image of the knot $14^n_{21021}$.}
distinguished by $\BockMixed$:
\begin{equation}\label{eq:beta-pairs}
\begin{aligned}
  13^n_{141}    &\to/<->/<1em>  14^n_{2551} &\quad
  13^n_{1002}   &\to/<->/<1em>  14^n_{6487} &\quad
  14^n_{1346}   &\to/<->/<1em>  14^n_{7711} &\quad
  14^n_{5293}   &\to/<->/<1em>  14^n_{12516} \\
  14^n_{5373}   &\to/<->/<1em>  14^n_{12516} &
  14^n_{6632}   &\to/<->/<1em>  \overline{14}^n_{21021} &
  14^n_{12393}   &\to/<->/<1em>  14^n_{12532}
\end{aligned}
\end{equation}

Existence of such pairs can be explained by the~observation that Bockstein
homomorphisms are described by a~noncanonical splitting $H^i(C,\ZmodTwo)\cong
H^i(C)\tensor\ZmodTwo \oplus \Tor(H^{i+1}(C),\ZmodTwo)$, and in case of
Khovanov homology the~two splittings, one for the~even and one for the~odd
version, do not coincide. In other words, we cannot pick such isomorphisms for
even and odd Khovanov homology that agree over $\ZmodTwo$.

$\BockMixed^2$ is, obviously, a much weaker invariant than $\BockMixed$ since
$\rk(\BockMixed^2)\le\rk\BockMixed$ (in an appropriate bigrading). 
Nonetheless, there are $4$ (resp.~$8$) pairs of knots with $15$ crossings that
can be distinguished by $\BockMixed^2$:
\begin{equation}
\begin{aligned}
  15^n_{23106}  &\to/<->/<1em>  15^n_{56014} &\qquad
  15^n_{23432}  &\to/<->/<1em>  15^n_{56014} \\
  15^n_{44028}  &\to/<->/<1em> \overline{15}^n_{50224} &\qquad
  15^n_{73047}  &\to/<->/<1em>  \overline{15}^n_{91280}
\end{aligned}
\end{equation}

It is important to notice that the Khovanov homology modulo $2$ for a knot $K$
and its mirror image $\overline{K}$ are dual to each other. As such, if any of
the homological operations over $\Z_2$ has the same ranks for two knots, say,
$K_1$ and $K_2$, then the ranks are the same for $\overline{K}_1$ and
$\overline{K}_2$ as well. The situation is different for integral operations.
It turns out that among all knots with up to $14$ crossings, both $\BockOddEv$
and $\BockEvOdd$ distinguish $7$ pairs of knots, but not their mirror images,
cf.~\eqref{eq:beta-pairs}:
\begin{equation}\label{eq:phi-pairs}
\begin{aligned}
  \overline{13}^n_{141}    &\to/<->/<1em>  \overline{14}^n_{2551} &\quad
  13^n_{1002}   &\to/<->/<1em>  14^n_{6487} &\quad
  14^n_{1346}   &\to/<->/<1em>  14^n_{7711} &\quad
  14^n_{5293}   &\to/<->/<1em>  14^n_{12516} \\
  14^n_{5373}   &\to/<->/<1em>  14^n_{12516} &
  14^n_{6632}   &\to/<->/<1em>  \overline{14}^n_{21021} &
  14^n_{12393}   &\to/<->/<1em>  14^n_{12532}
\end{aligned}
\end{equation}

\phantomsection\label{def:ev-sq-bock}%
\phantomsection\label{def:odd-sq-bock}%
One can consider integral homological operations of degree $2$ as well: $\BockSqEv := \BockOddEv\BockEvOdd$ and $\BockSqOdd := \BockEvOdd\BockOddEv$, the~former defined for the~even and the~latter for the~odd Khovanov homology. Both of them are, obviously, integral lifts of $\BockMixed^2$, see~\eqref{diag:lifts-of-bocks}. Computations reveal that $\BockSqOdd$ distinguishes the same $7$ pairs of knots with at most $14$ crossings as in~\eqref{eq:phi-pairs}, plus two more:
\begin{equation}\label{eq:thetaO-pairs}
	13^n_{651}   \to/<->/<1em>  14^n_{16550}\qquad
	13^n_{661}   \to/<->/<1em>  14^n_{16550}
\end{equation}
On the other hand, the first pair of knots with the same even and odd Khovanov homology but with different ranks of $\BockSqEv$ has 15 crossings, see Table~\ref{tab:distinguished-pairs}. There are $4$ such pairs in total:
\begin{equation}
\begin{aligned}
	\overline{15}^n_{23106} &\to/<->/<1em>  \overline{15}^n_{56014}  &\qquad
	\overline{15}^n_{23432} &\to/<->/<1em>  \overline{15}^n_{56014}  \\
	\overline{15}^n_{44028} &\to/<->/<1em>  15^n_{50224}  &\qquad
	\overline{15}^n_{73047} &\to/<->/<1em>  15^n_{91280}
\end{aligned}
\end{equation}

\begin{figure}[p]%
	\centering
	\begingroup
\renewcommand\arraystretch{1}%
\def\F{\Z_2}%
\def\o{{\oplus}}
\begin{tabular}{||r||c|c|c|c|c|c|c|c|c|c|c||}
	\hhline{|============|}
	$\OKh$&$-6$&$ -5 $&$ -4 $&$ -3 $&$ -2 $&$ -1 $&$  0 $&$  1 $&$  2 $&$  3 $&$ 4$\\ \hhline{#=#=|=|=|=|=|=|=|=|=|=|=#}
	$11 $&    &      &      &      &      &      &      &      &      &      &$\Z$\\ \hline
	$ 9 $&    &      &      &      &      &      &      &      &      &$\Z^2$&$\Z$\\ \hline
	$ 7 $&    &      &      &      &      &      &      &      &$\Z^4$&$\Z^2$&    \\ \hline
  $ 5 $&    &      &      &      &      &      &      &$\Z^5$&$\Z^4\o\F$&  &    \\ \hline
	$ 3 $&    &      &      &      &      &      &\Rnode{dom3}{$\Z^6$}&$\Z^5\o\F$&\Rnode{cod3}{$\F$}&    &    \\ \hline
	$ 1 $&    &      &      &      &      &\Rnode{dom1}{$\Z^7$}&$\Z^6\o\F$&\Rnode{cod1}{$\F$}&    &      &    \\ \hline
	$-1 $&    &      &      &      &\Rnode{dom-1}{$\Z^6$}&$\Z^7\o\F$&\Rnode{cod-1}{$\F$}&    &      &      &    \\ \hline
	$-3 $&    &      &      &\Rnode{dom-3}{$\Z^5$}&$\Z^6\o\F$&\Rnode{cod-3}{$\F$}&    &      &      &      &    \\ \hline
	$-5 $&    &      &\Rnode{dom-5}{$\Z^4$}&$\Z^5$&\Rnode{cod-5}{$\F$}&      &      &      &      &      &    \\ \hline
	$-7 $&    &$\Z^2$&$\Z^4$&      &      &      &      &      &      &      &    \\ \hline
	$-9 $&$\Z$&$\Z^2$&      &      &      &      &      &      &      &      &    \\ \hline
	$-11$&$\Z$&      &      &      &      &      &      &      &      &      &    \\
	\hhline{|============|}
\end{tabular}%
\diagarc[border=2\pslinewidth,linecolor=blue,arcangle=20]{->}{dom-5}{cod-5}%
\diagarc[border=2\pslinewidth,linecolor=blue,arcangle=20]{->}{dom-3}{cod-3}%
\diagarc[border=2\pslinewidth,linecolor=blue,arcangle=20]{->}{dom-1}{cod-1}%
\diagarc[border=2\pslinewidth,linecolor=blue,arcangle=20]{->}{dom1}{cod1}%
\diagarc[border=2\pslinewidth,linecolor=red,linestyle=dashed,arcangle=20]{->}{dom3}{cod3}%
\endgroup

	\par\bigskip
	\begingroup
\renewcommand\arraystretch{1}%
\def\F{\Z_2}%
\def\o{{\oplus}}
\begin{tabular}{||r||c|c|c|c|c|c|c|c|c|c|c||}
	\hhline{|============|}
	$\OKh$&$-4$&$ -3 $&$ -2 $&$ -1 $&$  0 $&$  1 $&$  2 $&$  3 $&$  4 $&$  5 $&$ 6$\\ \hhline{#=#=|=|=|=|=|=|=|=|=|=|=#}
	$11 $&    &      &      &      &      &      &      &      &      &      &$\Z$\\ \hline
	$ 9 $&    &      &      &      &      &      &      &      &      &$\Z^2$&$\Z$\\ \hline
	$ 7 $&    &      &      &      &      &      &      &      &$\Z^4$&$\Z^2$&    \\ \hline
  $ 5 $&    &      &      &      &      &      &      &$\Z^5\o\F$&$\Z^4$&  &    \\ \hline
	$ 3 $&    &      &      &      &      &      &$\Z^6\o\F$&$\Z^2\o\F$&&    &    \\ \hline
	$ 1 $&    &      &      &      &      &$\Z^7\o\F$&$\Z^6\o\F$&&    &      &    \\ \hline
	$-1 $&    &      &      &      &$\Z^6\o\F$&$\Z^7\o\F$&&    &      &      &    \\ \hline
	$-3 $&    &      &      &$\Z^5\o\F$&$\Z^6\o\F$&&    &      &      &      &    \\ \hline
	$-5 $&    &      &$\Z^4$&$\Z^5\o\F$&  &      &      &      &      &      &    \\ \hline
	$-7 $&    &$\Z^2$&$\Z^4$&      &      &      &      &      &      &      &    \\ \hline
	$-9 $&$\Z$&$\Z^2$&      &      &      &      &      &      &      &      &    \\ \hline
	$-11$&$\Z$&      &      &      &      &      &      &      &      &      &    \\
	\hhline{|============|}
\end{tabular}\endgroup

	\caption[Odd Khovanov homology for knots $13^n_{1002}$ and
	$14^n_{6487}$]{Odd Khovanov homology for knots $13^n_{1002}$ and
	$14^n_{6487}$ (upper table) and their mirror images (lower table).
	The~solid blue arrows indicate places where the~operation
	$\BockSqOdd$ is surjective for both knots. At the~dashed red
	arrow, $\BockSqOdd$ is surjective for $13^n_{1002}$, but is a~zero map
	for $14^n_{6487}$. Notice that there is no place for a~nontrivial
	$\BockSqOdd$ in the~lower table.}
	\label{tab:13-14}
\end{figure}

\begin{figure}[p]%
	\centering
	\setlength\tabcolsep{3pt}
	\begingroup
\Small
\renewcommand\arraystretch{1.1}%
\def\F{\Z_2}%
\def\G{\Z_3}%
\def\o{{\oplus}}
\def\s#1{\Small$#1$}
\def\t#1{\tiny$#1$}
\begin{tabular}{||r||c|c|c|c|c|c|c|c|c|c|c|c|c|c|c|c||}
	\hhline{|=================|}
	$\RedOKh$&$ -8 $&$ -7 $&$ -6 $&$ -5 $&$ -4 $&$ -3 $&$ -2 $&$ -1 $&$  0 $&$  1 $&$  2 $&$  3 $&$  4 $&$  5 $&$  6 $&$  7 $\\
	\hhline{#=#=|=|=|=|=|=|=|=|=|=|=|=|=|=|=|=#}
	$    10 $&      &      &      &      &      &      &      &      &      &      &      &      &      &      &      &$\Z$  \\ \hline
	$     8 $&      &      &      &      &      &      &      &      &      &      &      &      &      &      &$\Z^2$&      \\ \hline
	$     6 $&      &      &      &      &      &      &      &      &      &      &      &      &      &$\Z^2$&      &      \\ \hline
	$     4 $&      &      &      &      &      &      &      &      &      &      &      &      &\s{\Z^2\o\F}&&      &      \\ \hline
	$     2 $&      &      &      &      &      &      &      &      &      &      &$\Z$  &$H_1$ &      &      &      &      \\ \hline
	$     0 $&      &      &      &      &      &      &      &      &      &$\F$  &\s{\F^2\o\G^2}&&    &      &      &      \\ \hline
	$    -2 $&      &      &      &      &      &      &      &\rnode[tr]{dom-2}{$\Z$} &$H_1$  &\rnode[tl]{cod-2}{\s{\F^2\o\G}}&&      &      &      &      &      \\ \hline
	$    -4 $&      &      &      &      &      &      &\rnode[tr]{dom-4}{$\Z^2$}&$H_2$ &\rnode[tl]{cod-4}{$\F$}  &      &      &      &      &      &      &      \\ \hline
	$    -6 $&      &      &      &      &      &$\Z^3$&$H_2$ &      &      &      &      &      &      &      &      &      \\ \hline
	$    -8 $&      &      &      &      &$\Z^4$&\s{\Z^2\o\F^2}&&    &      &      &      &      &      &      &      &      \\ \hline
	$   -10 $&      &      &      &$\Z^4$&\s{\Z\o\F}&  &      &      &      &      &      &      &      &      &      &      \\ \hline
	$   -12 $&      &      &$\Z^4$&      &      &      &      &      &      &      &      &      &      &      &      &      \\ \hline
	$   -14 $&      &$\Z^3$&      &      &      &      &      &      &      &\multicolumn{7}{c||}{%
	\smash{\lower 0.9\ht\strutbox\hbox{\normalsize {\bf Notation:} $H_n:=\Z^n\oplus\F^2\oplus\G$}}}\\
	\hhline{----------}
	$   -16 $&$\Z$  &      &      &      &      &      &      &      &      &\multicolumn{7}{l||}{}\\
	\hhline{|=================|}
\end{tabular}%
\diagarc[border=2\pslinewidth,nodesep=1pt,linecolor=blue,arcangle=20]{->}{dom-4}{cod-4}%
\diagarc[border=2\pslinewidth,nodesep=1pt,linecolor=red,linestyle=dashed,arcangle=20]{->}{dom-2}{cod-2}%
\endgroup

	\par\bigskip
	\begingroup
\Small
\renewcommand\arraystretch{1.1}%
\def\F{\Z_2}%
\def\G{\Z_3}%
\def\o{{\oplus}}
\def\s#1{\Small$#1$}
\def\t#1{\tiny$#1$}
\begin{tabular}{||r||c|c|c|c|c|c|c|c|c|c|c|c|c|c|c|c||}
	\hhline{|=================|}
	$\RedOKh$&$ -7 $&$ -6 $&$ -5 $&$ -4 $&$ -3 $&$ -2 $&$ -1 $&$  0 $&$  1 $&$  2 $&$  3 $&$  4 $&$  5 $&$  6 $&$  7 $&$  8 $\\
	\hhline{#=#=|=|=|=|=|=|=|=|=|=|=|=|=|=|=|=#}
	$    16 $&      &      &      &      &      &      &      &      &      &      &      &      &      &      &      &$\Z$  \\ \hline
	$    14 $&      &      &      &      &      &      &      &      &      &      &      &      &      &      &$\Z^3$&      \\ \hline
	$    12 $&      &      &      &      &      &      &      &      &      &      &      &      &      &$\Z^4$&      &      \\ \hline
	$    10 $&      &      &      &      &      &      &      &      &      &      &      &$\Z$  &\s{\Z^4\o\F}&&      &      \\ \hline
	$     8 $&      &      &      &      &      &      &      &      &      &      &$\Z^2$&\s{\Z^4\o\F^2}&&    &      &      \\ \hline
	$     6 $&      &      &      &      &      &      &      &      &      &$\Z^2$&$H_3$ &      &      &      &      &      \\ \hline
	$     4 $&      &      &      &      &      &      &      &      &\s{\Z^2\o\F}&$H_2$& &      &      &      &      &      \\ \hline
	$     2 $&      &      &      &      &      &      &      &$H_1$ &$H_1$ &      &      &      &      &      &      &      \\ \hline
	$     0 $&      &      &      &      &      &      &\s{\F^2\o\G^2}&$\F$&&      &      &      &      &      &      &      \\ \hline
	$    -2 $&      &      &      &      &$\Z$  &$H_1$ &      &      &      &      &      &      &      &      &      &      \\ \hline
	$    -4 $&      &      &      &$\Z^2$&$\F$  &      &      &      &      &      &      &      &      &      &      &      \\ \hline
	$    -6 $&      &      &$\Z^2$&      &      &      &      &      &      &      &      &      &      &      &      &      \\ \hline
	$    -8 $&      &$\Z^2$&      &      &      &      &      &      &      &\multicolumn{7}{c||}{%
	\smash{\lower 0.9\ht\strutbox\hbox{\normalsize {\bf Notation:} $H_n:=\Z^n\oplus\F^2\oplus\G$}}}\\
	\hhline{----------}
	$   -10 $&$\Z$  &      &      &      &      &      &      &      &      &\multicolumn{7}{l||}{}\\
	\hhline{|=================|}
\end{tabular}%
\endgroup

	\caption[Odd reduced Khovanov homology for knots $16^n_{235548}$ and
	$16^n_{635483}$]{Odd reduced Khovanov homology for knots
	$16^n_{235548}$ and $16^n_{635483}$ (upper table) and their mirror
	images (lower table).  The~solid blue arrow indicate a place where
	the~operation $\BockSqOdd$ is surjective for both knots. At the~dashed
	red arrow, $\BockSqOdd$ is non-trivial for $16^n_{235548}$, but is
	a~zero map for $16^n_{635483}$. Notice that there is no place for
	a~nontrivial $\BockSqOdd$ in the~lower table.}
	\label{tab:16}
\end{figure}

\begin{observation}
For every pair of knots from~\eqref{eq:phi-pairs} and~\eqref{eq:thetaO-pairs},
these knots are distinguished by the ranks of $\BockSqOdd$, yet $\BockSqOdd$
is trivial for their mirror images. This can be explained by
noticing that a~knot may be thin while its mirror image is not, see
the~case of $13^n_{1002}$ and $14^n_{6487}$ in Figure~\ref{tab:13-14}.
\end{observation}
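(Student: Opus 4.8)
The Observation packages a computation together with a grading-theoretic explanation, and I would present it in that order. The first assertion---that the two knots in each pair of \eqref{eq:phi-pairs} and \eqref{eq:thetaO-pairs} have different ranks of $\BockSqOdd$---is a direct \texttt{KhoHo} computation: for both knots one forms the integral odd Khovanov homology, then the matrix of $\BockOddEv$ followed by that of $\BockEvOdd$, reads off $\rk\BockSqOdd$ in every bidegree, and exhibits one bidegree where the two ranks differ. For the sample pair $13^n_{1002}$, $14^n_{6487}$ this happens at $(i,j)=(2,3)$---the target of the red dashed arrow in Figure~\ref{tab:13-14}---where $\BockSqOdd$ is onto $\ZmodTwo$ for $13^n_{1002}$ but zero for $14^n_{6487}$; the nine pairs obtained this way are exactly those tallied in the $\BockSqOdd$ row of Table~\ref{tab:distinguished-pairs}.

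For the second assertion I would first record the bidegree of $\BockSqOdd$. Since $\BockSqOdd=\BockEvOdd\BockOddEv$ is a composite of two maps each of which, by the proof of Proposition~\ref{prop:lifts-of-bocks}, is given by halving a Khovanov differential, it preserves the quantum grading $j$ and raises the homological grading $i$ by $2$; thus $\BockSqOdd\colon\OKh^{i,j}(L)\to\OKh^{i+2,j}(L)$. In terms of the diagonal grading $\delta:=j-2i$ the operation $\BockSqOdd$ therefore lowers $\delta$ by $4$. Now for every knot occurring as a mirror image $\overline K$ in \eqref{eq:phi-pairs} or \eqref{eq:thetaO-pairs} the \texttt{KhoHo} output shows $\OKh(\overline K)$---free part and torsion alike---to be concentrated on two adjacent diagonals $\delta\in\{c,c+2\}$, the \emph{thin} pattern visible for $\overline{13}^n_{1002}$ and $\overline{14}^n_{6487}$ in the lower table of Figure~\ref{tab:13-14}. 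A map lowering $\delta$ by $4$ sends such groups into bidegrees with $\delta\in\{c-4,c-2\}$, which vanish, so $\BockSqOdd=0$ on $\overline K$. The bound does \emph{not} kill $\BockSqOdd$ on $K$ itself, because $\OKh(K)$ is not thin: its free summands lie on $\{c',c'+2\}$ while some $\ZmodTwo$ torsion spills one diagonal lower onto $\delta=c'-2$, leaving exactly the room for a $\delta$-degree $-4$ map from the free part on $c'+2$ to that torsion.

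The asymmetry between $K$ and $\overline K$---a knot thin while its mirror is not---is structurally just the universal-coefficient duality for Khovanov homology: the free part of $\OKh^{i,j}(\overline K)$ is dual to that of $\OKh^{-i,-j}(K)$ and so is reflected across $\delta\mapsto-\delta$, whereas the torsion of $\OKh^{i,j}(\overline K)$ is the torsion of $\OKh^{-i+1,-j}(K)$ and so is reflected across $\delta\mapsto-\delta-2$; the extra shift of $2$ can pull torsion that stuck out of the thin band of $K$ back inside the band of $\overline K$. The main obstacle is that this degree bookkeeping alone does not force the individual mirrors $\overline K$ to be thin---non-alternating knots are thin only exceptionally---so the vanishing of $\BockSqOdd$ on each $\overline K$, like the inequality of ranks for $K$ versus its partner, ultimately rests on the explicit homology computation for the knots in the two lists. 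Everything beyond that is the grading argument above.
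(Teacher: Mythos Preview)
Your proposal is correct and follows the same approach the paper takes: the paper treats this as a computational observation whose explanation is the thinness remark embedded in the statement and illustrated by Figure~\ref{tab:13-14}, with no separate proof given. You have simply fleshed out that sketch---making the $\delta$--degree of $\BockSqOdd$ explicit, spelling out why two-diagonal support forces vanishing, and noting the UCT shift that can make a mirror thin when the original is not---while correctly acknowledging that the thinness of each individual mirror still rests on the explicit computation.
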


\begin{observation}
Looking at Table~\ref{tab:distinguished-pairs}, it appears that $\BockEvOdd$
and $\BockOddEv$ are stronger invariants than $\BockMixed$ and
$\BockMixed^2$, while $\BockSqOdd$ is even stronger. This is indeed true with
a few notable exceptions. Among all prime non-alternating knots with at most
$16$ crossings, there are two pairs that can be distinguished by the rank of
$\BockMixed$ but not by either $\BockEvOdd$ or $\BockOddEv$:
\begin{equation}
	16^n_{129312}  \to/<->/<1em>  \overline{16}^n_{640105}\qquad
	16^n_{240722}  \to/<->/<1em>  16^n_{640105}
\end{equation}
Also, there are two pairs of knots that can be distinguished by $\BockEvOdd$
and $\BockOddEv$, but not by $\BockSqOdd$:
\begin{equation}
	\overline{16}^n_{198481}  \to/<->/<1em>  \overline{16}^n_{416282}\qquad
	\overline{16}^n_{639703}  \to/<->/<1em>  \overline{16}^n_{698630}
\end{equation}
On the other hand, the lists of distinguishable knots with up to $16$
crossings for $\BockEvOdd$ and $\BockOddEv$ indeed coincide.
\end{observation}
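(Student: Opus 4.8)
At bottom this observation reports a finite computation, so the plan is to carry that computation out and then read off the three comparisons together with their exceptions. First I would run {\tt KhoHo}~\cite{KhoHo} on every prime non-alternating knot with at most $16$ crossings and on its mirror image, recording in each case the bigraded groups $\EKh$, $\OKh$, $\KhModTwo$ and the rank, in every bigrading, of each operation in Table~\ref{tab:distinguished-pairs}. The ranks of $\BockEv$ and $\BockOdd$ are just the numbers of $\ZmodTwo$ invariant factors of $\EKh$ and $\OKh$ (Introduction); the ranks of $\BockMixed$, $\BockMixed^2$, $\BockEvOdd$, $\BockOddEv$, $\BockSqEv$ and $\BockSqOdd$ are obtained by linear algebra from the explicit cocycle formulas in the proofs of Propositions~\ref{prop:three-operations} and~\ref{prop:lifts-of-bocks}, which use only the even and odd chain complexes and the identity $\diffUnfd=\tfrac{1+\oddParameter}{2}(\diffEv+\diffOdd)$ from Section~\ref{sec:oper-mod-2}, so that the unified complex $\UKhCom$ never has to be handled over $\Zunfd$ directly.

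Next I would form the set $\mathcal P$ of unordered pairs $\{K_1,K_2\}$ of such knots, taken up to simultaneous mirror reflection, with $\EKh(K_1)\cong\EKh(K_2)$ and $\OKh(K_1)\cong\OKh(K_2)$ as bigraded abelian groups; passing to mirror classes is the natural bookkeeping, since over $\ZmodTwo$ a pair and its mirror are separated together whereas the integral operations need not respect mirroring. For each operation $\theta$ let $\mathcal P_\theta\subseteq\mathcal P$ be the set of classes that $\theta$ separates, i.e.\ those on which its bigraded rank function is nonconstant for at least one representative (equivalently, when $\theta$ is defined over $\ZmodTwo$, for both). The three assertions then read $\mathcal P_{\BockMixed}\cup\mathcal P_{\BockMixed^2}\subseteq\mathcal P_{\BockEvOdd}$, $\mathcal P_{\BockEvOdd}\subseteq\mathcal P_{\BockSqOdd}$ and $\mathcal P_{\BockEvOdd}=\mathcal P_{\BockOddEv}$, and each is checked directly against the stored data; the first two hold up to the two exceptional classes exhibited in each case, which I would verify one at a time by displaying, for the pairs listed, the relevant rows of the even or odd homology tables --- exactly as in Figures~\ref{tab:13-14} and~\ref{tab:16} --- and seeing that the coarser operation separates the pair while the finer one does not. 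For the $\BockSqOdd$ exceptions the mechanism is visible on the page: one knot of the pair is thin enough that no bigrading supports a nonzero degree-$2$ operation, so $\BockSqOdd$ vanishes identically on it while the coarser integral operations remain sensitive.

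The conceptually substantive point is the equality $\mathcal P_{\BockEvOdd}=\mathcal P_{\BockOddEv}$. I would try to deduce it structurally, from a mirror-duality carrying $\BockEvOdd$ on a knot $K$ to $\BockOddEv$ on its mirror $\overline K$, suitably dualized and regraded: such a statement makes $\BockEvOdd$ separate the class of $\{K_1,K_2\}$ precisely when $\BockOddEv$ does, so the equality follows at once. The natural source is the pair of short exact sequences \eqref{ses:Z-odd-pi-even} and \eqref{ses:Z-even-pi-odd} of $\Zunfd$-modules, which are interchanged by swapping $\Zev$ and $\Zodd$; the difficulty --- and the main obstacle to upgrading the observation to a theorem --- is that $\Zunfd$-linear duality does \emph{not} effect that swap (a short computation gives $\Hom_{\Zunfd}(\Zev,\Zunfd)\cong\Zev$), and mirror images are already delicate for odd Khovanov homology, so the comparison would have to be carried out by hand, perhaps by writing down a mirror model of $\UKhCom$ directly. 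Absent such an argument, the coincidence of the two lists is simply part of the computational output, whose real cost is its scale: over two million knots, {\tt KhoHo} at the very edge of its $16$-crossing range, and scrupulous chirality bookkeeping, since --- unlike over $\ZmodTwo$ --- an integral operation that separates a pair tells one nothing about its mirror.
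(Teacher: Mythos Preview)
Your proposal is correct and matches the paper's approach: the observation is purely a report of a finite computer search with \texttt{KhoHo}, and the paper offers no argument beyond that. Your discussion of a possible structural mirror-duality explanation for $\mathcal P_{\BockEvOdd}=\mathcal P_{\BockOddEv}$ goes beyond anything the paper attempts---there the coincidence is simply asserted as computational output---so your honest conclusion that it remains part of the data rather than a theorem is exactly right.
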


We also looked at all the knots that have the same even and odd \emph{reduced}
Khovanov homology, but different ranks of the reduced homological operations.
Since the width of the reduced homology is always less by $1$ than the width
of the corresponding unreduced homology, and since the even reduced Khovanov
homology has, in general, very little torsion, it is natural to expect that
the reduced homological operations would be not as strong as their unreduced
counterparts. This is indeed the case.

\begin{observation}
Among all prime non-alternating knots with at most $16$ crossings that have
the same even and odd reduced Khovanov homology, there are only $4$ pairs that
can be distinguished by ranks of either $\RedBockMixed$, or $\RedBockOddEv$, or
$\RedBockEvOdd$, or $\RedBockSqOdd$:
\begin{equation}
\begin{aligned}
	\overline{16}^n_{209296} &\to/<->/<1em>  \overline{16}^n_{699643} &\qquad
						 16^n_{235548} &\to/<->/<1em>  16^n_{635483} \\
	\overline{16}^n_{485898} &\to/<->/<1em>  16^n_{543682} &\qquad
	\overline{16}^n_{910482} &\to/<->/<1em>  16^n_{919988}
\end{aligned}
\end{equation}
$\RedBockMixed$ distinguishes their mirror images as well, but no other reduced
homological operation does. See Figure~\ref{tab:16} for an example.
\end{observation}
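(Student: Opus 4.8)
The statement is a finite computational claim, and the plan is to settle it by an exhaustive machine search with {\tt KhoHo}~\cite{KhoHo}, structured as follows.

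First I would fix the input: the full Knotscape census of prime non-alternating knots with at most $16$ crossings, together with all mirror images, each given by a fixed planar diagram $D$. For each such knot $K$ I would build the reduced unified chain complex $\RedUKhCom(D)$ over $\Zunfd$ exactly as in Section~\ref{sec:reduced}: form the cube of resolutions, put on $\UKhCom(D)$ the $A_\oddParameter$--module structure coming from merging a circle at the basepoint, and pass to the subcomplex annihilated by $v_-$. By the pullback description of $\Zunfd$ this is the same data as the pair of reduced integral complexes $\RedEKhCom(D)$, $\RedOKhCom(D)$ together with the identification of their mod~$2$ reductions, so in practice one stores $\rdiffEv$ and $\rdiffOdd$.

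Next I would compute, bigrading by bigrading: Smith normal forms of $\rdiffEv$ and $\rdiffOdd$ give $\RedEKh(K)$ and $\RedOKh(K)$ as graded abelian groups with torsion, hence $\RedKhModTwo(K)$ by the Universal Coefficient Theorem. The reduced Bocksteins are then linear algebra: $\RedBockEv[x]=[\tfrac12\rdiffEv x]$, $\RedBockOdd[x]=[\tfrac12\rdiffOdd x]$, $\RedBockMixed=\RedBockEv+\RedBockOdd$ --- restrict to mod~$2$ cocycles on which the relevant integral differential is divisible by $2$, divide, reduce, and take the rank of the induced operation on $\RedKhModTwo(K)$. For the integral lifts I would use the reduced analogue of Proposition~\ref{prop:lifts-of-bocks}: lifting a cocycle $x\in\RedEKhCom(D)$ to $(x,x)\in\RedUKhCom(D)$ and applying $\rdiffUnfd$ realizes $\RedBockEvOdd[x]=[\tfrac12\rdiffOdd x]\in\RedOKh(K)$, and symmetrically for $\RedBockOddEv$; by (the reduced version of) Corollary~\ref{cor:int-Bocks-vs-2} the images are $\ZmodTwo$--vector spaces, so the rank in a bigrading is the $\ZmodTwo$--dimension of the image, and $\RedBockSqOdd=\RedBockEvOdd\RedBockOddEv$ is obtained by composing. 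Then I would index all knots (with mirrors) by the graded isomorphism type of the pair $(\RedEKh,\RedOKh)$, collect the collisions, and within each collision compare the bigraded rank sequences of $\RedBockMixed$, $\RedBockOddEv$, $\RedBockEvOdd$, $\RedBockSqOdd$; the output is the list of four pairs. Finally I would separately record the behavior on the four mirror pairs: $\RedBockMixed$ still separates them --- consistent with the fact that $\RedKhModTwo(\overline K)$ is dual to $\RedKhModTwo(K)$, which transports the mod~$2$ rank data to the mirrors but leaves the integral operations with ``no room'' on the non-thin side, cf.~Figure~\ref{tab:16} --- while none of $\RedBockOddEv$, $\RedBockEvOdd$, $\RedBockSqOdd$ does.

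The main obstacle is scale rather than ideas: the $16$--crossing case involves roughly a million diagrams once mirrors are counted, each producing a cube of $2^{16}$ vertices before simplification, and each requiring many integral Smith normal forms together with $\ZmodTwo$ rank computations. The practical work is in aggressively simplifying the cube by Gaussian elimination before homology is taken, in keeping the integral linear algebra from suffering coefficient blow-up, and in making sure the census and its mirrors are enumerated completely and without duplication. A useful consistency check is the width heuristic mentioned before the statement --- the reduced homology is one unit narrower and the even reduced homology carries little torsion, so very few bigradings can host a $\ZmodTwo$ summand adjacent to the torsion pattern needed for a nontrivial operation --- which makes a four-element answer plausible but does not, of course, replace the enumeration.
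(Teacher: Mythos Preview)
Your proposal is correct and matches the paper's own approach: the Observation is a purely computational claim, and the paper establishes it exactly as you describe, by an exhaustive {\tt KhoHo} search over the Knotscape census computing bigraded ranks of the reduced operations and comparing them on collisions of $(\RedEKh,\RedOKh)$. The paper gives no further argument beyond reporting the output, so your more detailed description of the pipeline (Smith normal forms, explicit Bockstein formulas, the pullback model for the integral lifts) is if anything more thorough than what appears there.
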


We finish this paper with a~few more conjectures of various levels of plausibility.

\begin{conjecture}
	Every two knots that are distinguished by the~ranks of $\BockEvOdd$ are also distinguished by those of $\BockOddEv$ and vise versa. The~same is also true for $\RedBockEvOdd$ and $\RedBockOddEv$.
\end{conjecture}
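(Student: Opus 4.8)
The plan is to rephrase the conjecture as a comparison of $\ZmodTwo$--ranks of connecting maps and then to reduce it to a structural property of the unified complex. Recall from Section~\ref{sec:oper-integral} that $\BockEvOdd$ and $\BockOddEv$ are the connecting homomorphisms of the long exact sequences obtained by applying the (exact) functor $\UKhCom(D)\otimes_{\Zunfd}M$, with $M$ a $\Zunfd$--module, to the short exact sequences \eqref{ses:Z-odd-pi-even} and \eqref{ses:Z-even-pi-odd}; this turns them into \eqref{ses:Kh-odd-pi-even} and \eqref{ses:Kh-even-pi-odd}. Write $p_E\colon\UKhCom(D)\to\EKhCom(D)$ and $p_O\colon\UKhCom(D)\to\OKhCom(D)$ for the two quotient maps and $\iota_E,\iota_O$ for the two inclusions; the pullback description of $\Zunfd$ gives the chain-level identities $\iota_Ep_E+\iota_Op_O=2\cdot\id$, $p_E\iota_E=p_O\iota_O=2\cdot\id$, and $p_E\iota_O=p_O\iota_E=0$. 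From the long exact sequences together with Corollary~\ref{cor:int-Bocks-vs-2}, in every bigrading $\rk\BockEvOdd=\dim_{\ZmodTwo}\operatorname{coker}(p_E)$ and $\rk\BockOddEv=\dim_{\ZmodTwo}\operatorname{coker}(p_O)$, while $\im p_E\supseteq 2\,\EKh(L)$ and $\im p_O\supseteq 2\,\OKh(L)$, so these cokernels are quotients of $\EKh(L)\otimes\ZmodTwo$ and $\OKh(L)\otimes\ZmodTwo$ respectively.

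Next I would peel off the part of each rank that is already visible in $\EKh(L)\oplus\OKh(L)$. Using the commuting squares of Proposition~\ref{prop:lifts-of-bocks} one computes, in each bigrading,
\[
	\rk\BockEvOdd^{i}=\rk(\BockOdd^{i}|_{\EKh^{i}(L)\otimes\ZmodTwo})+\dim_{\ZmodTwo}(\im\BockEvOdd^{i}\cap 2\,\OKh^{i+1}(L)),
\]
and the symmetric identity for $\rk\BockOddEv^{i}$ with $\BockEv$ and with the roles of $\EKh$ and $\OKh$ exchanged. For two knots with the same even and odd Khovanov homology the numbers $\rk\BockEv^{i}$, $\rk\BockOdd^{i}$ and the isomorphism type of $\KhModTwo(L)$ coincide, but the \emph{restriction} of $\BockOdd$ to the sublattice $\EKh^{i}(L)\otimes\ZmodTwo\subseteq\KhModTwo^{i}(L)$, as well as the correction term on the right, are precisely the data not determined by $\EKh(L)\oplus\OKh(L)$: they record how the even and odd structures sit inside $\KhModTwo(L)$, i.e.\ the class of the extensions \eqref{ses:Kh-odd-pi-even} and \eqref{ses:Kh-even-pi-odd}. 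The heart of the matter would then be to prove that this extra datum is detected in the same bigradings by $\BockEvOdd$ and by $\BockOddEv$. Here one would use $\iota_Ep_E+\iota_Op_O=2\cdot\id$ together with the fact that $\UKhCom(D)$ is, up to quasi-isomorphism, reconstructed from $\EKhCom(D)$ and the class of the extension \eqref{ses:Kh-odd-pi-even} (and symmetrically from $\OKhCom(D)$ and \eqref{ses:Kh-even-pi-odd}) via the homotopy pullback of Section~\ref{sec:pullback}: this should let one convert a change of that class which alters $\rk\BockEvOdd$ in some bigrading into one that alters $\rk\BockOddEv$ in some (a priori different) bigrading, and conversely.

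The main obstacle is exactly the feature that makes the unified theory interesting: $\UKh(L)$ is strictly stronger than $\EKh(L)\oplus\OKh(L)$, so $\operatorname{coker}p_E$ and $\operatorname{coker}p_O$ are not functions of the even and odd homology alone and cannot simply be replaced by known invariants. One must instead understand the interaction between the higher $2$--torsion of $\EKh(L)$ and $\OKh(L)$ --- the source of the intersections with $2\,\OKh$ and $2\,\EKh$ above --- and the gluing, and prove that this interaction is symmetric under exchanging even and odd. I do not expect a short argument. The most promising concrete route seems to be a classification, bigrading by bigrading, of the finitely generated $\Zunfd$--modules that can arise as $\UKh^{i,j}(L)$ (the ring $\Zunfd\cong\{(a,b)\in\Z^{2}:a\equiv b\bmod 2\}$ is the coordinate ring of a node, whose module theory is well understood), together with a careful accounting of how each indecomposable summand contributes to the two ranks. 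The reduced statement for $\RedBockEvOdd$ and $\RedBockOddEv$ should follow from the same analysis applied to the $\Zunfd$--complex $\RedUKhCom(D)$.
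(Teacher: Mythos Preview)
The statement you are attempting to prove is listed in the paper as an open \emph{conjecture}; the only evidence offered is computational (the lists of pairs distinguished by $\BockEvOdd$ and by $\BockOddEv$ happen to coincide for all prime knots through $16$ crossings, cf.\ Table~\ref{tab:distinguished-pairs}). There is no proof in the paper to compare your attempt against.

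Your write-up is not a proof either, and you say so yourself. The chain-level identities $\iota_Ep_E+\iota_Op_O=2\cdot\id$, $p_E\iota_E=p_O\iota_O=2\cdot\id$, $p_E\iota_O=p_O\iota_E=0$ are correct, as is the decomposition
\[
	\rk\BockEvOdd^{i}=\rk\bigl(\BockOdd^{i}\big|_{\EKh^{i}(L)\otimes\ZmodTwo}\bigr)+\dim_{\ZmodTwo}\bigl(\im\BockEvOdd^{i}\cap 2\,\OKh^{i+1}(L)\bigr),
\]
and it is a genuine observation that the two summands encode exactly the data not determined by $\EKh(L)\oplus\OKh(L)$. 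But after isolating these terms you write ``the heart of the matter would then be to prove\dots'' and stop; the symmetry you need between the $\BockEvOdd$--residuals and the $\BockOddEv$--residuals is precisely the content of the conjecture, and nothing in the pullback identities forces it.

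The concrete route you sketch at the end also has a gap. From the long exact sequence one has $\rk\BockEvOdd^{i,j}=\dim_{\ZmodTwo}\ker\bigl(\OKh^{i+1,j}\to\UKh^{i+1,j}\bigr)$ and $\rk\BockOddEv^{i,j}=\dim_{\ZmodTwo}\ker\bigl(\EKh^{i+1,j}\to\UKh^{i+1,j}\bigr)$, so neither rank is a function of a single $\Zunfd$--module $\UKh^{i,j}(L)$. Worse, since $\Zunfd=\Z[C_2]$ has infinite global dimension (e.g.\ $\Ext^{2n}_{\Zunfd}(\Zev,\Zev)\cong\ZmodTwo$ for all $n\geq 1$), a bounded free $\Zunfd$--complex is \emph{not} determined up to quasi-isomorphism by its homology, and the groups $\EKh^{i,j}$, $\OKh^{i,j}$ are not recoverable from $\UKh^{i,j}\otimes_{\Zunfd}\Zev$, $\UKh^{i,j}\otimes_{\Zunfd}\Zodd$. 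Hence a ``bigrading by bigrading'' classification of indecomposable $\Zunfd$--modules (the lattices are indeed only $\Zev$, $\Zodd$, $\Zunfd$) cannot by itself produce the accounting you want: you would have to control the quasi-isomorphism type of $\UKhCom(D)$, not merely its graded homology. As it stands, the conjecture remains open.
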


\begin{conjecture}
	$\UKh(L)$ is even stronger invariant than ranks of all the~homological operations discussed in this paper.
\end{conjecture}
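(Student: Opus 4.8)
Since the statement is a conjecture, what follows is a program rather than a proof, and its shape is forced by the very obstruction the authors flag: $\UKh(L)$ is a finitely generated module over the non-regular ring $\Zunfd$, and at present there is no way to compute it. The plan has two parts. First, make $\UKh(L)$ effectively computable; second, search among knots with identical even and odd Khovanov homology for a pair on which $\UKh$ takes different values while every rank appearing in Table~\ref{tab:distinguished-pairs}, together with the reduced versions, agrees. Such a pair witnesses that the implication ``$\UKh(L)\cong\UKh(L')\Rightarrow$ all listed operations have equal ranks'' is strict, which is precisely the assertion.

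For the computational part I would use the pullback picture of Section~\ref{sec:pullback}: by the corollary there, $\UKhCom(D)$ is the fibre product $\EKhCom(D)\times_{\KhComModTwo(D)}\OKhCom(D)$ sitting inside $\EKhCom(D)\oplus\OKhCom(D)$, so it is an explicit complex of free $\Zunfd$-modules assembled from {\tt KhoHo} output, and $\UKh(L)$ is its homology. The difficulty is that $\Zunfd$ is not a principal ideal domain, so there is no Smith normal form; but $\Zunfd[1/2]\cong\Z[1/2]\times\Z[1/2]$ is a product of PIDs, so all the subtlety is $2$-local, and $\Zunfd\otimes\Z_{(2)}\cong\Z_{(2)}[\oddParameter]/(\oddParameter^2{-}1)$ is a $2$-local ring with a single closed point at which two lines cross. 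Over such a ring the finitely generated modules that one expects to occur as homology of a well-behaved complex are sums of standard pieces — copies of $\Zunfd$, of $\Zev$ and $\Zodd$, of $\Z/p$ with either $\oddParameter$-action for odd $p$, and short string modules such as $\Zunfd/(2^k,\oddParameter{-}1)$ and $\Zunfd/(2^k(\oddParameter{-}1))$ — so one should either implement presentation-matrix reduction over $\Z_{(2)}[\oddParameter]/(\oddParameter^2{-}1)$ directly, or, more modestly, compute a battery of invariants of $\UKh(L)$ sufficient to pin down isomorphism type for small knots: its $\Zev$- and $\Zodd$-specializations, the groups $\Tor^{\Zunfd}_{*}(\UKh(L),\ZmodTwo)$, the $\oddParameter$-action on $p$-primary torsion, and the higher-order data of the $2$-primary Bockstein spectral sequence of $\UKhCom(D)$.

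The search part then extends Section~\ref{sec:computation}: intersect, over the knots already known to share even and odd Khovanov homology, the pairs that \emph{no} listed operation separates, and compute $\UKh$ for the survivors. A survivor is expected to exist because the ranks of $\BockEvOdd$, $\BockOddEv$ and their iterates record only the primary linking data of the Mayer--Vietoris sequence attached to the fibre product, whereas the isomorphism class of $\UKhCom(D)$ as the extension in \eqref{ses:Kh-even-pi-odd} carries secondary, Massey-type information — it separates, say, a $\Zunfd/(4,\oddParameter{-}1)$ summand from a sum of two length-one summands even when all connecting homomorphisms between the corresponding even and odd groups vanish — and, beyond that, passing from an operation to merely its rank discards still more. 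I expect the computational part to be the main obstacle: the whole enterprise rests on turning the module theory of $\Zunfd$ at the prime $2$ into an algorithm, which is exactly where, as the authors observe, no existing software helps.
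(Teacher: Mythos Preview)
The paper offers no proof of this statement: it is the final conjecture of Section~\ref{sec:computation}, stated without argument or supporting evidence beyond the general philosophy that the module $\UKh(L)$ over $\Zunfd$ should encode more than any finite list of ranks extracted from it. There is therefore nothing to compare your proposal against.

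You correctly recognise that the statement is a conjecture and offer a research programme rather than a proof. The programme is sensible and in the spirit of the paper's own computational methodology: compute $\UKh(L)$ directly, then search for a pair of knots indistinguishable by all the tabulated operations yet separated by $\UKh$. Your structural analysis of $\Zunfd$ --- that the obstruction to a normal-form algorithm is entirely $2$-local, since inverting $2$ splits the ring as $\Z[1/2]\times\Z[1/2]$ --- is correct and is a reasonable starting point for an implementation. The heuristic that ranks of the connecting maps record only first-order extension data while $\UKh$ as a $\Zunfd$-module remembers higher filtration lengths is also plausible, though of course it remains a heuristic until an example is found.

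One small caution: your list of ``standard pieces'' that can occur as indecomposable summands of a finitely generated $\Zunfd$-module is optimistic. The $2$-local ring $\Z_{(2)}[\oddParameter]/(\oddParameter^2{-}1)$ has wild representation type in general, so a complete classification of finitely generated modules is not available; you would need either to restrict attention to the modules that actually arise from Khovanov complexes, or to work with a sufficiently separating collection of computable invariants (as you suggest in your fallback), rather than aim for a full normal form. This does not invalidate the programme, but it does mean the ``modest'' route is likely the only feasible one.
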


\end{document}